\DeclareSymbolFontAlphabet{\mathbb}{AMSb}
\DeclareSymbolFontAlphabet{\mathbbl}{bbold}
\def\N{\mathbb N}
\def\R{\mathbb R}
\def\P{\mathbb P}
\def\D{\mathcal{D}}
\def\<{\langle}
\def\>{\rangle}
\def\lsim{\lesssim}
\def\Chi{\raise .3ex \hbox{\large $\chi$}} 
\def\ov{\overline}
\newcommand{\HGbracket}[2]{\langle #1,#2\rangle_{\Gamma}}
\def\ol#1{\overline{#1}}
\def\[{\Bigl [}
\def\]{\Bigr ]}
\def\({\Bigl (}
\def\){\Bigr )}
\def\l{\iota}
\def\dsp{\displaystyle}
\def\x{{\bf x}}
\def\n{{\bf n}}
\def\U{{\bf U}}
\def\q{{\bf q}}
\def\g{{\mathbf{g}}}
\def\aa{{\mathfrak a}}
\def\faces{\mathcal{F}}
\def\x{{\bf x}}
\def\dsp{{\displaystyle x}}
\def\d{{\rm d}}
\def\div{{\rm div}}
\def\q{\mathbf{q}}
\def\K{\mathbb{K}}
\def\n{\mathbf{n}}
\def\dsp{\displaystyle}
\def\bu{\mathbf{u}}
\def\bv{\mathbf{v}}
\def\bw{\mathbf{w}}
\def\O{\Omega}
\def\G{\Gamma}
\newcommand{\CinfT}[2]{{C^\infty_{\partial#1,T}([0,T]\times(#1\backslash #2))}}
\newcommand{\bCinf}[2]{{\boldsymbol{C}^\infty_{\partial#1}(#1\backslash #2)}}
\newcommand{\jump}[1]{\llbracket #1 \rrbracket}
\newcommand{\Hhalfn}{H^{\nf12}_{0,j,n}(\Gamma)}
\newcommand{\Hminushalfn}{H^{-\nf12}_{0,j,n}(\Gamma)}
\def\bbsig{\bbsigma}
\def\bbeps{\bbespilon}
\newcommand{\bb}{\mathbb}
\newcommand{\dtk}{\delta t^{k+\frac{1}{2}}}
\newcommand{\dtkmun}{\delta t^{k-\frac{1}{2}}}
\newcommand{\nf}{\nicefrac}
\newcommand{\email}[1]{\href{mailto:#1}{#1}}
\theoremstyle:=definition,remark,plain\do{%
        \expandafter\g@addto@macro\csname th@\theoremstyle\endcsname{%
            \addtolength\thm@preskip\parskip
            }%
        }
\newtheorem{theorem}{Theorem}
\numberwithin{theorem}{section}
\newtheorem{proposition}[theorem]{Proposition}
\newtheorem{lemma}[theorem]{Lemma}
\theoremstyle{remark}
\newtheorem{remark}[theorem]{Remark}
\theoremstyle{definition}
\newtheorem{definition}[theorem]{Definition}
\newenvironment{acknowledgements}
      {\bigskip\bigskip~\newline\textbf{Acknowledgements}}
\def\g{{\rm nw}}
\def\l{{\rm w}}
\def\rt{{\rm rt}}
\def\trace{\gamma}
\def\grad{\nabla}
\def\del{\partial}
\def\div{{\rm div}}
\def\weakto{\rightharpoonup}
\definecolor{labelkey}{rgb}{0.6,0,1}
\def\thm@space@setup{%
  \thm@preskip=\parskip \thm@postskip=0pt
}
\begin{document}
\title{
  Numerical analysis of a mixed-dimensional poromechanical model with frictionless contact at matrix--fracture interfaces
}
\author[1,2]{{Francesco Bonaldi}\footnote{Corresponding author, \email{francesco.bonaldi@univ-perp.fr}}}
\author[3,1]{{J\'er\^ome Droniou}\footnote{\email{jerome.droniou@umontpellier.fr}}}
\author[4]{{Roland Masson}\footnote{\email{roland.masson@univ-cotedazur.fr}}}
\affil[1]{IMAG, Univ Montpellier, CNRS, Montpellier, France}%
\affil[2]{LAMPS, Universit\'e de Perpignan Via Domitia, Perpignan, France}
\affil[3]{School of Mathematics, Monash University, Victoria 3800, Australia}%
\affil[4]{Universit\'e C\^ote d'Azur, Inria, CNRS, Laboratoire J.A. Dieudonn\'e, team Coffee, Nice, France}%
\date{}
\maketitle

\begin{abstract}
\noindent
We present a complete numerical analysis for a general discretization of a coupled flow--mechanics model in fractured porous media, considering single-phase flows and including frictionless contact at matrix--fracture interfaces, as well as nonlinear poromechanical coupling. Fractures are described as planar surfaces, yielding the so-called mixed- or hybrid-dimensional models. Small displacements and a linear elastic behavior are considered for the matrix.
The model accounts for discontinuous fluid pressures at matrix--fracture interfaces in order to cover a wide range of normal fracture conductivities.

The numerical analysis is carried out in the Gradient Discretization framework~\cite{gdm}, encompassing a large family of conforming and nonconforming discretizations. The convergence result also yields, as a by-product, the existence of a weak solution to the continuous model. A numerical experiment in 2D is presented to support the obtained result, employing a Hybrid Finite Volume scheme for the flow and second-order finite elements ($\P_2$) for the mechanical displacement coupled with face-wise constant ($\bb P_0$) Lagrange multipliers on fractures, representing normal stresses, to discretize the contact conditions. %
\bigskip \\
\textbf{MSC2010:} 65M12, 76S05, 74B10, 74M15\medskip\\
\textbf{Keywords:} poromechanics, discrete fracture matrix models, contact mechanics, Darcy flow, discontinuous pressure model, Gradient Discretization Method, convergence analysis.
\end{abstract}
%
\section{Introduction}

Coupled flow and geomechanics in fractured porous media play an important role in many subsurface applications. This is typically the case of CO$_2$ storage, for which fault reactivation {(that is, the renewed displacement of a fault that has undergone a prolonged period of inactivity)} that can result from CO$_2$ injection must be avoided to preserve the storage integrity. On the other hand, in enhanced geothermal systems, fracture conductivity must be increased by hydraulic stimulation to produce heat while avoiding the risk of induced seismicity.
Such processes couple the flow in the porous medium and the fractures, the poromechanical deformation of the porous rock, and the mechanical behavior of the fractures. 
Their mathematical modeling is typically based on mixed-dimensional geometries representing the fractures as a network of codimension-one surfaces. The first ingredient of the model is a mixed-dimensional flow, typically coupling a Poiseuille flow along the network of fractures with the Darcy flow in the surrounding porous rock (called the matrix). This type of mixed-dimensional flow models has been introduced in the pioneering works \cite{MAE02,BMTA03,FNFM03,KDA04,MJE05}, and has been the object of an intensive research over the last twenty years, both in terms of discretization and numerical analysis \cite{RJBH06,MF07,ABH09,Jaffre11,TFGCH12,SBN12,BGGM2015,DHM16,BGGLM16,BHMS2016,AHMED201749,BHMS2018,BNY18,
FLEMISCH2018239,CDF2018,NBFK2019,BERRE2021103759,ANTONIETTI2021,AGHILI2021110452}. The second ingredient is based on poromechanics, coupling the rock deformation with the Darcy flow in the matrix domain, see \cite{coussy} as a reference textbook on this topic. In this paper we assume small strains and porosity variations, as well as a poroelastic mechanical behavior of the porous rock. The third ingredient of the fully coupled model is related to the mechanical behavior of the fractures, given the matrix mechanical deformation and the matrix and fracture fluid pressures. This behavior is typically based on mechanics governing the contact and slip conditions \cite{oden-contact,wriggers2006,Wohlmuth11}.
In this work, we restrict the analysis to a simplified contact model based on Signorini complementarity conditions, thereby assuming frictionless contact. Note also that fractures are assumed to pre-exist and that fracture propagation is not addressed here. 

The modeling and numerical simulation of such mixed-dimensional poromechanical problems have been the object of many recent works
\cite{tchelepi-castelletto-2020,GKT16,GH19,contact-norvegiens,thm-bergen,GDM-poromeca-cont,GDM-poromeca-disc,BDMP:21}. In terms of discretization, they are mostly based on conservative finite volume schemes for the flow and use either a conforming finite element method \cite{tchelepi-castelletto-2020,GKT16,GH19,BDMP:21} or a finite volume scheme for the mechanics \cite{contact-norvegiens,thm-bergen}. Note that the saddle point nature of the coupling between the matrix fluid pressure and the displacement field requires a compatibility condition between both discretizations, as in \cite{contact-norvegiens,thm-bergen,GDM-poromeca-cont,GDM-poromeca-disc,BDMP:21}, or alternatively additional stabilization terms to get the stability of the pressure solution in the limit of low rock permeability, incompressible fluid, and small times. 
The contact mechanics formulation is a key ingredient to efficiently handle the nonlinear variational inequalities of the contact fracture model. It is typically either based on a mixed formulation with Lagrange multipliers to impose the contact conditions as in \cite{tchelepi-castelletto-2020,contact-norvegiens,thm-bergen,BDMP:21}, or on a consistent Nitsche penalization method as in~\cite{GH19,beaude2023mixed}.  Again, if a mixed formulation is used, a compatibility condition must be satisfied between the Lagrange multiplier and displacement field spaces \cite{contact-norvegiens,thm-bergen,BDMP:21}, or a stabilization must be specifically designed \cite{tchelepi-castelletto-2020}.

Key difficulties arise in the analysis and numerical analysis of this type of mixed-dimensional poromechanical models. They are first related to the nonlinear dependence of the flow on the fracture aperture, and by the nonlinear contact mechanics itself. Second, they result from the geometrical complexity induced by fracture networks which must account for immersed, non-immersed, and intersecting fractures. Furthermore, one must also handle the degeneracy of the fracture conductivity as a result of the vanishing fracture aperture at immersed fracture tips. 
Few works are available in the literature and they are all based on simplifying assumptions, none addressing all the difficulties raised by such models. In~\cite{GWGM2015,hanowski2018}, the authors consider  a simplified model assuming open fractures with no contact and a frozen fracture conductivity leading to a linear poromechanical coupling. The well-posedness is obtained based on a detailed analysis of weighted function spaces accounting for the degeneracy of the fracture conductivity at the tips.
In~\cite{GDM-poromeca-cont,GDM-poromeca-disc}, a convergence analysis to a weak solution is carried out in the Gradient Discretization (GD) framework for a mixed-dimensional two-phase poromechanical model assuming open fractures with no contact but taking into account the full nonlinear coupling and the degeneracy at the fracture tips. As a result of the no-contact assumption, lower bounds on the discrete porosity and fracture aperture solutions must be assumed, which precludes to obtain the existence of a weak solution as a by-product of the convergence analysis.  
In \cite{Boon2023}, the authors carry out a well-posedness analysis based on mixed-dimensional geometries and monotone operators theory. They consider a Tresca frictional contact model, but neglect the dependence of the fracture conductivity on the aperture, and the conductivity is not allowed to vanish at the tips. {In the preprint~\cite{dehoop2023coupling}, the authors prove the well-posedness of a mixed-dimensional poromechanical model using a regularized contact model accounting for friction and assuming a frozen fracture conductivity. In the recent preprint~\cite{both2023global} the authors obtain a global existence result for a two-phase poromechanical model (without fractures) including a bound constraint on the porosity and accounting for  degeneracy of  mobilities.}

In this paper, we carry out a convergence analysis of the discrete model using compactness arguments.
The model we consider has a range of challenging features: frictionless contact at matrix--fracture interfaces, complex fracture networks, nonlinear dependence of the flow on the fracture aperture, as well as the degeneracy of the fracture conductivity at the fracture tips. The analysis we carry out moreover covers many different numerical approximations of the flow and mechanical components of the model. As a by-product of this convergence analysis, we also obtain the existence of a weak solution to the continuous model. 
To the best of our knowledge, this is the first complete numerical analysis of such a complex poromechanical model, with all the above-mentioned features. We also highlight how this analysis, carried out through a general approach, could potentially be adapted to even more complex models (e.g., multi-phase flows).

The model is discretized using the Gradient Discretization Method (GDM)~\cite{gdm}. This framework covers a wide range of conforming and nonconforming discretizations both for the flow and the mechanics. The discretization of the contact mechanics is, on the other hand, more constrained. It is based on a mixed formulation using piecewise constant Lagrange multipliers and assumes an inf-sup condition  between the space of Lagrange multipliers and the discrete displacement field~\cite{Wohlmuth11}. The choice of piecewise constant Lagrange multipliers has key advantages. It leads to a local expression of the contact conditions, allowing the use of efficient non-smooth Newton and active set algorithms, and requires no special treatment at fracture tips and intersections.

Our convergence proof elaborates on previous works. In \cite{BDMP:21} we proved stability estimates and existence of a solution for the GD of a mixed-dimensional two-phase poromechanical model with Coulomb frictional contact. In~\cite{GDM-poromeca-cont,GDM-poromeca-disc} we obtained compactness estimates for a mixed-dimensional two-phase poromechanical model with no contact.  Combining these results we can establish the relative compactness of the fracture aperture, which is a key element to pass to the limit in the discrete variational formulation of the flow model. The main new difficulty addressed in this work is related to passing to the limit in the mechanical variational inequality, which involves the Lagrange multiplier and the normal jump of the displacement field at matrix--fracture interfaces.

The remainder of the  paper is structured as follows. In Section~\ref{sec:notation}, we introduce the mixed-dimensional geometry and function spaces. In Section~\ref{sec:modeleCont}, we describe the continuous problem in its strong formulation, provide a definition of weak solutions, and list assumptions on the data. Section~\ref{sec:gradientscheme} presents the general GD framework adopted to define the discrete counterpart of the coupled problem. Section~\ref{sec:convergence} contains the convergence result and its proof, which is the main contribution of this work.  The possible extension of this convergence result to other models is then briefly discussed in Section~\ref{sec:other.models}. 
In Section~\ref{sec:num.example}, we present a 2D numerical test to numerically investigate the convergence of a scheme fitting our framework, based on a Hybrid Finite Volume method for the mixed-dimensional flow \cite{ecmor-joubine,BHMS2016}, and a $\mathbb{P}_2$--$\mathbb{P}_0$ discrete mixed formulation of the contact mechanics. Finally, in Section~\ref{sec:conclusions} we outline some concluding remarks.

\section{Mixed-dimensional geometry and function spaces}\label{sec:notation}
We distinguish scalar fields from vector fields by using, respectively, lightface letters and boldface letters. The overline notation $\ol{v}$ is used to distinguish an exact (scalar or vector) field from its discrete counterpart $v$. $\Omega\subset\R^d$, $d\in\{2,3\}$, is assumed to be a bounded polytopal domain, and is partitioned
into a fracture domain $\Gamma$ and a matrix domain $\O\backslash\overline\G$.
The network of fractures is defined by 
$$
\overline \Gamma = \bigcup_{i\in I} \overline \Gamma_i,
$$  
where each fracture $\Gamma_i\subset \Omega$, $i\in I$, {is a polygonal simply connected open domain of a plane of $\R^d$}. Without restriction of generality, we assume that the fractures may only intersect at their boundaries (Figure~\ref{fig:network}), that is, for any $i,j \in I$ with $i\neq j$, it holds $\Gamma_i\cap \Gamma_j = \emptyset$, but not necessarily $\overline{\Gamma}_i\cap \overline{\Gamma}_j = \emptyset$.

\begin{figure}[h!]
\begin{center}
\includegraphics[scale=.55]{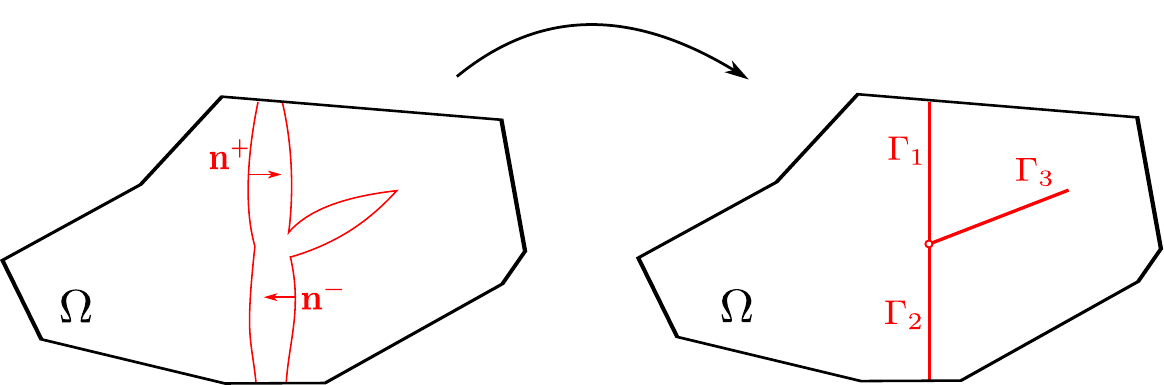}   
\caption{Illustration of the dimension reduction in the fracture aperture for a 2D domain $\Omega$ with three intersecting fractures $\Gamma_i$, $i\in\{1,2,3\}$. Equi-dimensional geometry on the left, mixed-dimensional geometry on the right.}
\label{fig:network}
\end{center}
\end{figure}

The two sides of a given fracture of $\Gamma$ are denoted by $\pm$ in the matrix domain, with unit normal vectors $\n^\pm$ oriented outward of the sides $\pm$. We denote by $\gamma_\aa$ the trace operator on side $\aa \in \{+,-\}$ of $\Gamma$ for functions in $H^1(\Omega{\backslash}\overline\Gamma)$ and by $\gamma_{\del\Omega}$ the trace operator for the same functions on $\del\O$. The jump operator on $\Gamma$ for functions $\ol{\bu}$ in  $H^1(\O\backslash\overline\Gamma)^d$ is defined by
$$
\jump{\ol{\bu}} = {\gamma_+\ol{\bu} - \gamma_-\ol{\bu}}, 
$$
and we denote by
$$
\jump{\ol{\bu}}_n = \jump{\ol{\bu}}\cdot\n^+
$$
its normal component. 
The tangential gradient and divergence along the fractures are respectively denoted by $\nabla_\tau$ and $\div_\tau$. The symmetric gradient operator $\bbeps$ is defined such that $\bbeps(\ol{\bv}) = {1\over 2} (\nabla \ol{\bv} +(\nabla \ol{\bv})^t)$ for a given vector field $\ol{\bv}\in H^1(\O\backslash\overline\Gamma)^d$.

\begin{figure}
  \begin{center}
  \includegraphics[scale=.65]{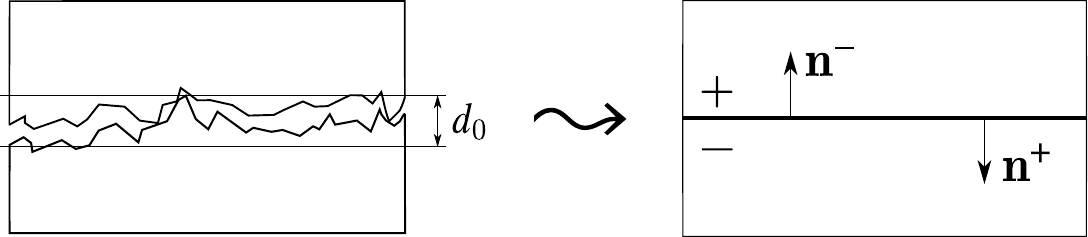}
  \caption{Conceptual fracture model with contact at asperities, $d_0$ being the fracture aperture at contact state.}
  \label{fig:d0}
\end{center}  
\end{figure}

Let us denote by $d_0: \Gamma \to (0,+\infty)$ the fracture aperture in the contact state (see Figure \ref{fig:d0}). The function $d_0$ is assumed to be continuous with zero limits at
$\partial \Gamma \backslash (\partial\Gamma\cap \partial\Omega)$ (i.e.~the tips of $\Gamma$) and strictly positive limits at $\partial\Gamma\cap \partial\Omega$.

Let us introduce some relevant function spaces. $H_{d_0}^1(\Gamma)$ is the space of functions $v_\Gamma\in L^2(\Gamma)$ such that $d_0^{\nf 3 2} \nabla_\tau v_\Gamma$ belongs to  $L^2(\Gamma)^{d-1}$, 
and whose traces are continuous at fracture intersections $\del\G_i\cap\del\G_j$ (for $(i,j)\in I\times I$ with $i\neq j$) and vanish on the boundary $\partial \Gamma\cap \partial\Omega$. The space for the displacement is
\begin{equation*}
\U_0 =\left\{ \ol{\bv}\in H^1(\O\backslash\overline\Gamma)^d : \trace_{\del\O} \ol{\bv} = 0\right\}.
\end{equation*}
The space for the pair of matrix/fracture pressures is
\begin{equation*}
V^0 = V^0_m \times V^0_f\quad \mbox{ with }\quad
V^0_m = \left\{ \ol{v}\in H^1(\Omega{\backslash}\overline\Gamma) \,:\, \trace_{\del\O} \ol{v} = 0\right\}\  \mbox{ and } \
V^0_f = H_{d_0}^1(\Gamma).
\end{equation*}
For $\ol{v} =(\ol{v}_m,\ol{v}_f) \in V^0$, the jump operator on side $\aa \in \{+,-\}$ of a given fracture is
$$
\jump{\ol{v}}_\aa = \gamma_\aa \ol{v}_m - \ol{v}_f.
$$
Finally, for any $x\in\mathbb R$, we set $x^+ = \max\{0,x\}$ and $x^- = (-x)^+$.

\section{Problem statement}\label{sec:modeleCont}

The primary unknowns of the coupled model in its strong form are the matrix and fracture pressures
$\ol{p}_\omega$, $\omega\in\{m,f\}$, and the displacement vector field $\ol{\bu}$. The problem  is formulated in terms of flow model and contact mechanics model, together with coupling conditions. The flow model is a mixed-dimensional model assuming an incompressible fluid and accounting for the volume conservation equations and for the Darcy law:
\begin{subequations}\label{eq:model}
\begin{equation}
\label{eq_edp_hydro} 
\left\{\hspace{-.3cm}
\begin{array}{lll}
&\partial_t \ol{\phi}_m   + \div \left(  \ol{\q}_m \right) = h_m & \mbox{ on } (0,T)\times \Omega{\backslash}\overline\Gamma,\\[1ex]
& \ol{\q}_m = \dsp -  {\K_m\over \eta}  \nabla \ol{p}_m & \mbox{ on } (0,T)\times \Omega{\backslash}\overline\Gamma,\\[1ex]
&\partial_t \ol{d}_f 
+ \div_\tau (  \ol{\q}_f ) - \ol{\q}_m\cdot\n^+ - \ol{\q}_m\cdot\n^-  = h_f & \mbox{ on } (0,T)\times \Gamma,\\[1ex]
& \ol{\q}_f = \dsp -   {1\over 12} {\ol{d}_f^{\,3} \over \eta}  \nabla_\tau \ol{p}_f  & \mbox{ on } (0,T)\times \Gamma. 
\end{array}
\right.
\end{equation}
In \eqref{eq_edp_hydro},  the constant fluid dynamic viscosity is denoted by $\eta>0$, the matrix porosity by $\ol{\phi}_m$ and the matrix permeability tensor by $\K_m$. The fracture aperture, denoted by $\ol{d}_f$, yields the fracture conductivity ${1\over 12} \ol{d}_f^{\,3}$ \emph{via} the Poiseuille law. 

The contact mechanics model accounts for the poromechanical equilibrium equation with a Biot linear elastic constitutive law and a frictionless contact model at matrix--fracture interfaces:
\begin{equation}
\label{eq_edp_meca} 
\left\{\hspace{-.3cm}
\begin{array}{lll}
& -\div \(\bbsig(\ol{\bu}) - b ~ \ol{p}_m{\mathbb I}\)= \mathbf{f} & \mbox{ on } (0,T)\times \Omega{\backslash}\overline\Gamma,\\[1ex]
  & \bbsig(\ol{\bu}) = \frac{E}{1+\nu}\(\bbeps(\ol{\bu}) + \frac{\nu}{1-2\nu} (\div\,\ol{\bu})\mathbb{I}\) & \mbox{ on } (0,T)\times \Omega{\backslash}\overline\Gamma,  \\[1ex]
  & \ol{\bf T}^{+} + \ol{\bf T}^{-} = {\bf 0}  & \mbox{ on } (0,T)\times \Gamma,\\[1ex]
  & \ol{T}_n \leq 0, \,\,   \jump{\ol{\bu}}_n \leq 0, \,\, \jump{\ol{\bu}}_n ~  \ol{T}_n = 0   & \mbox{ on } (0,T)\times \Gamma,\\[1ex]
 & \ol{\bf T}_\tau = {\bf 0}   & \mbox{ on } (0,T)\times \Gamma.\\[1ex]    
\end{array}
\right.
\end{equation}
In \eqref{eq_edp_meca}, $b$ is the Biot coefficient, $E$ and $\nu$ are the effective Young modulus and Poisson ratio, and the contact tractions are defined by 
\begin{equation*}
\left\{\hspace{-.3cm}
\begin{array}{lll}
  & \ol{\bf T}^{\aa} = \gamma^\aa_{\mathbf{n}}(\bbsig(\ol{\bu}) - b ~ \ol{p}_m \mathbb I) + \ol{p}_f \n^\aa  & \mbox{ on } (0,T)\times \Gamma,  \aa \in \{+,-\},\\[1ex]
  & \ol{T}_n = \ol{\bf T}^{+}\cdot \n^+ & \mbox{ on } (0,T)\times \Gamma, \\[1ex]
  & \ol{\bf T}_\tau = \ol{\bf T}^{+} - (\ol{\bf T}^{+}\cdot \n^+)\n^+ & \mbox{ on } (0,T)\times \Gamma, 
\end{array}
\right.
\end{equation*}
with $\gamma^\aa_{\mathbf{n}}$ the sided normal trace operator on the side $\aa\in \{+,-\}$ of $\Gamma$ for functions in $H_\div(\Omega\backslash\ov\Gamma)^{d\times d}$. 
The complete system of equations~\eqref{eq_edp_hydro}--\eqref{eq_edp_meca} is closed by means of coupling conditions. The first equation in \eqref{closure_laws} below accounts for the linear poroelastic state law for the variations of the matrix porosity $\ol{\phi}_m$. The second one stands for the matrix--fracture transmission conditions for the Darcy flow model. Following~\cite{FNFM03,MJE05}, they account for the normal flux continuity at each side $\aa$ of a fracture, combined with an approximation of the normal flux in the width of the fractures.  The normal fracture transmissibility $\Lambda_f$ is assumed here to be independent of $\ol{d}_f$ for simplicity, but the subsequent analysis can accommodate as well a fracture-width-dependent transmissibility, provided that the function $\ol{d}_f\mapsto\Lambda_f(\ol{d}_f)$ is continuous and bounded above and below by strictly positive constants.
The third equation in \eqref{closure_laws} is the definition of the fracture aperture $\ol{d}_f$.  
\begin{equation}
\label{closure_laws}
\left\{\hspace{-.3cm}
\begin{array}{lll}
  & \partial_t \ol{\phi}_m = \dsp b~\div \, \partial_t \ol{\bu} + \frac{1}{M} \partial_t \ol{p}_m & \mbox{ on } (0,T)\times \Omega{\backslash}\overline\Gamma,\\  [2ex]
  & \ol{\q}_m\cdot\n^\aa = \Lambda_f \jump{\ol{p}}_\aa  &   \mbox{ on } (0,T)\times \Gamma, \aa \in \{+,-\},\\[1ex]
& \ol{d}_f = d_0 -\jump{\ol{\bu}}_n   & \mbox{ on } (0,T)\times \Gamma.  
\end{array}
\right.
\end{equation}
\end{subequations}
The following initial conditions are imposed on the pressures and matrix porosity:
$$
(\ol{p}_\omega)|_{t=0} = \ol{p}_{0,\omega} \text{ for }\omega\in\{m,f\},  \quad (\ol{\phi}_m)|_{t=0}= \ol{\phi}_m^0, 
$$
and normal flux conservation for $\ol{\q}_f$ is prescribed at fracture intersections not located on the boundary $\partial\Omega$. 

As shown in Figure \ref{fig:d0}, due to surface roughness, the fracture aperture $\ol{d}_f \geq d_0$ does not vanish except at the tips. The open space is always occupied by the fluid, which exerts on each side $\aa$ of the fracture the pressure $\ol{p}_f$ appearing in the definition of the contact traction $\ol{\bf T}^{\aa}$.
{Note that the model is independent of the fracture orientation $\n^+$, since the normal traction $\ol{T}_n$ and the normal jump $\jump{\ol{\bu}}_n$, as well as the equation $\ol{\bf T}_\tau = {\bf 0}$, do not depend on this orientation.}

We make the following main assumptions on the data:
\begin{enumerate}[label=(H\arabic*),leftmargin=*]
\item $b\in [0,1]$ is the Biot coefficient, $M\in (0,+\infty]$ is the Biot modulus, and $E >0$, $-1<\nu<1/2$ are Young's modulus and Poisson's ratio, respectively. {Although the analysis easily extends to the case of non constant coefficients, they are assumed here to be constant for simplicity,} and $1/M$ is interpreted as $0$ when $M=+\infty$ (incompressible rock). \label{first.hyp}
\item The initial matrix porosity satisfies $\ol{\phi}_m^0 \in L^\infty(\O)$. 
  \item The fracture aperture at contact state satisfies $d_0 > 0$, and is continuous over the fracture network $\Gamma$ with zero limits at $\partial\Gamma\backslash (\partial\Gamma \cap \partial\Omega)$ and strictly positive limits at $\partial\Gamma\cap\partial\Omega$.
\item The initial pressures are such that $\ol{p}_{0,m}\in L^\infty(\Omega)$ and $\ol{p}_{0,f}\in L^\infty(\Gamma)$. 
\item The source terms satisfy $\mathbf f \in L^2(\Omega)^d$, $h_m\in L^2((0,T)\times \Omega),$ and $h_f \in L^2((0,T)\times \Gamma)$.
  \item The normal fracture transmissibility $\Lambda_f\in L^\infty(\Gamma)$ is uniformly bounded from below by a strictly positive constant. 
 \item The matrix permeability tensor $\K_{m}\in L^\infty(\Omega)^{d\times d}$ is symmetric and uniformly elliptic.
 \label{last.hyp}
\end{enumerate}

Following~\cite{Wohlmuth11}, the poromechanical model  with frictionless contact is formulated in mixed form using a scalar Lagrange multiplier $\ol{\lambda}: \Gamma \to \R$ at matrix--fracture interfaces. {We denote by
$$
\Hhalfn=\left\{\jump{\bv}_n\,:\,\bv\in \U_0\right\}\subset L^2(\Gamma)
$$
the space of normal jumps on $\Gamma$ of functions in $\U_0$, endowed with its natural norm
$$
\|q\|_{\Hhalfn}=\min\{\|\bv\|_{\U_0}\,:\,\bv\in \U_0,\,\jump{\bv}_n=q\}.
$$
We note that $\Hhalfn$ is not in general a subspace of $H^{\nf12}(\Gamma)$ since $\n$ is discontinuous at the intersections between transverse fractures. The dual of $\Hhalfn$ is denoted by $\Hminushalfn$, and the corresponding duality pairing is written $\HGbracket{\cdot}{\cdot}$.}
The dual positive cone is
$$
{C}_f = \left\{ \ol{\mu}  \in  {\Hminushalfn} \,:\,
 \HGbracket{\ol{\mu}}{\ol{v}} \leq 0 \mbox{ for all } \ol{v} \in {\Hhalfn} \mbox{ with } \ol{v} \leq 0\right\}. 
$$

The Lagrange multiplier formulation of \eqref{eq_edp_meca} then formally reads, dropping any consideration of regularity in time: find $\ol{\bu}:[0,T]\to \U_0$ and $\ol{\lambda} :[0,T]\to {C}_f$ such that for all $\ol{\bv}:[0,T]\to \U_0$ and $\ol{\mu}:[0,T]\to {C}_f$,
\begin{equation*}
\begin{aligned}
 & \dsp \int_\O \( \bbsig(\ol{\bu}): \bbeps(\ol{\bv}) - b ~\ol{p}_m \div(\ol{\bv})\) \d\x
+  \HGbracket{\ol{\lambda}}{\jump{\ol{\bv}}_n} 
+ \int_\G \ol{p}_f ~\jump{\ol{\bv}}_n  ~\d\sigma 
\dsp =  \int_\Omega \mathbf{f}\cdot \ol{\bv} ~\d\x,\\[.8em]
& \dsp \HGbracket{\ol{\mu}-\ol{\lambda}}{\jump{\ol{\bu}}_n} \leq 0.   
\end{aligned}
\end{equation*}
Note that, based on the variational formulation, the Lagrange multiplier satisfies $\ol{\lambda} = - \ol T_n$.

Let us denote by $\CinfT{\Omega}{\ol{\Gamma}}$ the space of smooth functions $\bar v:[0,T]\times (\O\backslash\overline\G)\to\R$ vanishing on $\partial\Omega$ and at $t=T$, and whose derivatives of any order admit finite limits on each side of $\Gamma$. We also denote, with an abuse of notation, 
$$
L^2(0,T;C_f)=\left\{\zeta\in L^2(0,T;{\Hminushalfn})\,:\,\zeta(t)\in C_f\mbox{ for a.e.~$t\in(0,T)$}\right\}.
$$
This set is endowed with the topology of $L^2(0,T;{\Hminushalfn})$.
The definition of weak solution to the model is the following.
\begin{definition}[Weak solution]\label{def:weak.solution} Under Assumptions~\ref{first.hyp}--\ref{last.hyp}, a weak solution to \eqref{eq:model} is a quadruple
$((\ol{p}_\omega)_{\omega\in\{m,f\!\}},\ol{\bu},\ol{\lambda})$ such that $(\ol{p}_m,\ol{p}_f)\in L^2(0,T;V^0)$, $\ol{\bu}\in L^\infty(0,T;\U_0)$, $\ol{d}_f^{\,\nf32}\nabla_\tau \ol{p}_f\in L^2((0,T)\times \Gamma)^{d-1}$ (with $\ol{d}_f=d_0-\jump{\ol{\bu}}_n$), $\ol{\lambda}\in L^2(0,T;C_f)$ and, for all $\varphi=(\varphi_m,\varphi_f)\in \CinfT{\Omega}{\ol{\Gamma}}\times C^\infty_c([0,T)\times \Gamma)$, $\ol{\bv}\in \U_0$, and $\ol{\mu}\in C_f$,
\begin{equation}
\begin{aligned}
&\int_0^T\int_\Omega \(-\ol{\phi}_m\partial_t\varphi_m + {\K_m \over \eta} \nabla\ol{p}_m\cdot\nabla\varphi_m\)~\d t\d\x
+\int_0^T\int_\Gamma \(-\ol{d}_f\partial_t\varphi_f+\frac{\ol{d}_f^{\,3}}{12\eta}\nabla_\tau\ol{p}_f\cdot\nabla_\tau\varphi_f\)~\d t\d\sigma\\
&\quad+\sum_{\aa\in\{+,-\}}\int_0^T\int_\Gamma\Lambda_f\jump{\ol{p}}^\aa\jump{\varphi}^\aa~\d t\d\sigma
{-}\int_\Omega \ol{\phi}_m^0\varphi_m(0)\d\x{-}\int_\Gamma \ol{d}_f^0\varphi_f(0)\d\x\\
&\qquad\qquad=\int_0^T\int_\Omega h_m\varphi_m~\d t\d\x
+\int_0^T\int_\Gamma h_f\varphi_f~\d t\d\sigma,
\label{weak:flow}
\end{aligned}\end{equation}
\begin{align}
&\int_\O \( \bbsig(\ol{\bu}): \bbeps(\ol{\bv}) - b ~\ol{p}_m \div(\ol{\bv})\) \d\x
+  \HGbracket{\ol{\lambda}}{\jump{\ol{\bv}}_n} 
+ \int_\G \ol{p}_f ~\jump{\ol{\bv}}_n  ~\d\sigma 
=  \int_\Omega \mathbf{f}\cdot \ol{\bv} ~\d\x\quad\mbox{ a.e. on $(0,T)$},
\label{weak:meca}\\[.8em]
& \HGbracket{\ol{\mu}-\ol{\lambda}}{\jump{\ol{\bu}}_n} \leq 0\quad\mbox{ a.e. on $(0,T)$}. 
\label{weak:multiplier}
\end{align}
The initial fracture aperture $\ol{d}_f^0$ in \eqref{weak:flow} is $\ol{d}_f^0=d_0-\jump{\ol{\bu}^0}_n$ where $(\ol{\bu}^0,\ol{\lambda}^0)\in\U_0\times C_f$ is the solution to \eqref{weak:meca}--\eqref{weak:multiplier} with $\ol{p}_{0,\omega}$ instead of $\ol{p}_\omega$ ($\omega\in\{m,f\}$).
These equations are complemented with the closure law
\begin{equation}\label{weak:closure.phi}
\ol{\phi}_m(t)-\ol{\phi}_m^0=b\,\div(\ol{\bu}(t)-\ol{\bu}^0)+\frac{1}{M}(\ol{p}_m(t)-\ol{p}_m^0)\quad\mbox{ for a.e. $t\in (0,T)$.}
\end{equation}
\end{definition}

\begin{remark}[Alternate formulation for the mechanical equations]\label{rem:meca.integral}
We note that \eqref{weak:meca}--\eqref{weak:multiplier} are equivalent to imposing these equations only at $t=0$, which fixes the initial displacement, and: for all $\ol{\bv}\in {L^2}(0,T;\U_0)$ and all $\ol{\mu}\in L^2(0,T;C_f)$,
\begin{align}
&\int_0^T\int_\O \( \bbsig(\ol{\bu}): \bbeps(\ol{\bv}) - b ~\ol{p}_m \div(\ol{\bv})\) ~\d t\d\x
+ \int_0^T \HGbracket{\ol{\lambda}}{\jump{\ol{\bv}}_n} \d t
+ \int_0^T\int_\G \ol{p}_f ~\jump{\ol{\bv}}_n  ~\d t\d\sigma \nonumber\\
&\qquad\qquad
=  \int_0^T\int_\Omega \mathbf{f}\cdot \ol{\bv} ~\d t\d\x,
\label{weak:meca.alternate}\\
& \int_0^T\HGbracket{\ol{\mu}-\ol{\lambda}}{\jump{\ol{\bu}}_n}~\d t \leq 0. 
\label{weak:multiplier.alternate}
\end{align}
\end{remark}

\section{The Gradient Discretization Method}\label{sec:gradientscheme}
\subsection{Gradient discretizations}
The Gradient Discretization for the Darcy discontinuous pressure model, introduced in~\cite{DHM16}, is defined by a finite-dimensional vector space of discrete unknowns $X^0_{\D_p} = X^0_{\D_p^m} \times X^0_{\D_p^f}$, 
and:
\begin{itemize}
\item  two discrete gradient linear operators on the matrix and fracture domains
$$
 \nabla_{\D_p}^m : X^0_{\D_p^m} \rightarrow L^\infty(\Omega)^d, \quad \quad 
   \nabla_{\D_p}^f : X^0_{\D_p^f} \rightarrow L^\infty(\Gamma)^{d-1},
$$ 
\item two function reconstruction linear operators on the matrix and fracture domains 
$$
\Pi_{\D_p}^m : X^0_{\D_p^m} \rightarrow L^\infty(\Omega),\quad\quad \Pi_{\D_p}^f : X^0_{\D_p^f} \rightarrow L^\infty(\Gamma),
$$
\item for each $\aa \in \{+,-\}$, a jump reconstruction linear operator $\jump{\cdot}^\aa_{\D_p}$: $X^0_{\D_p} \rightarrow L^\infty(\Gamma)$.
\end{itemize}  
The vector space $X^0_{\D_p}$ is endowed with the following quantity, assumed to define a norm:
\begin{equation}\label{def:XDp.norm}
\|v\|_{\D_p}:=   \|\nabla_{\D_p}^m v\|_{L^2(\Omega)^d} 
+ \|d_{0}^{\nf 3 2}\nabla_{\D_p}^f v\|_{ L^2(\Gamma)^{d-1} } + \sum_{\aa \in \{+,-\}} \|\jump{v}^\aa_{\D_p}\|_{L^2(\Gamma)}.
\end{equation}

As usual in the GDM framework~\cite[Part III]{gdm}, various choices of these spaces and operators lead to various numerical methods for the flow component of the model. Schemes covered by this framework include Hybrid Finite Volume (HFV) methods~\cite{ecmor-joubine,BHMS2016}, cell-centered finite volume schemes with Two-Point Flux Approximation (TPFA) on strongly admissible meshes \cite{KDA04,ABH09,gem.aghili}, or some symmetric Multi-Point Flux Approximations (MPFA) \cite{TFGCH12,SBN12,AELHP153D} on tetrahedral or hexahedral meshes; Mixed Hybrid Mimetic and Mixed or Mixed Hybrid Finite Element discretizations \cite{MJE05,BHMS2016,AFSVV16,Girault2019}; and vertex-based discretizations such as the Vertex Approximate Gradient scheme \cite{BHMS2016,DHM16,BHMS2018}. For the discretization of the mechanical component of the system, we however restrict ourselves to conforming methods for the displacement (usual methods for elasticity models), and piecewise constant spaces for the Lagrange multipliers.  We therefore take a finite-dimensional space
$$
X_{\D_\bu} = X^0_{\D_\bu^m}\times  X_{\D_\bu^f} \ \mbox{ with } \ X^0_{\D_\bu^m}\subset\U_0
 \mbox{ and } X_{\D_\bu^f} = \left\{\mu=(\mu_\sigma)_{\sigma\in\faces_{\D_\bu}}\,:\,\mu_\sigma\in \R\right\},
$$
where $\faces_{\D_\bu}$ is a partition of $\Gamma$ assumed to be conforming with the partition $\{\Gamma_i, i\in I\}$ of $\Gamma$ in planar fractures, {in the sense that each fracture $\Gamma_i$ is the union of a subset of faces of $\faces_{\D_\bu}$}. We also identify $\mu\in X_{\D_\bu^f}$ with the piecewise constant function $\mu:\Gamma\to\R$ defined by $\mu_{|\sigma}=\mu_\sigma$ for all $\sigma\in \faces_{\D_\bu}$. The discrete dual positive cone is defined as
$$
{C}_{\D_\bu^f} = \left\{\mu \in X_{\D_\bu^f}  \,:\, \mu_{\sigma}\geq 0,  \quad\forall \sigma \in\faces_{\D_\bu}\right\}.  
$$

A spatial GD can be extended into a space-time GD by complementing it with:
\begin{itemize}
\item 
a discretization $ 0 = t_0 < t_1 < \dots < t_N = T $ of the time interval $[0,T]$,
\item
interpolators of initial conditions: $I^m_{\D_p} \colon L^\infty(\Omega)\rightarrow X^0_{\D_p^m}$ and $I^f_{\D_p} \colon L^\infty(\Gamma)\rightarrow X^0_{\D_p^f}$ for the pressures, and $J^m_{\D_p} \colon L^2(\Omega)\rightarrow X^0_{\D_p^m}$ for the porosity.
\end{itemize}
For $k\in\{0,\ldots,N\}$, we denote by $\dtk = t_{k+1}-t_k$ the time steps, and by $\Delta t = \max_{k\in\{0,\ldots,N\}} \dtk$ the maximum time step. 

Spatial operators are extended into space-time operators as follows. Let $\Psi_\D$ be a spatial GDM operator defined in $X_\D^0$ with $\D=\D_p^m$ or $\D_p^f$,  and let $w=(w_k)_{k=0}^{N}\in (X^0_{\D})^{N+1}$. Then, its space-time extension is defined by 
$$
\Psi_{\D}w(0,\cdot) = \Psi_{\D}w_0, \mbox{ and } \Psi_{\D}w(t,\cdot) = \Psi_{\D}w_{k+1} \mbox{ for all $t\in (t_k,t_{k+1}]$ and $k\in\{0,\dots,N-1\}$}.
$$
For convenience, the same notation is kept for the spatial and space-time operators. Similarly, we identify $(X_{\D_\bu^m}^0)^{N+1}$ and $(X_{\D_\bu^f})^{N+1}$, respectively, with the spaces of piecewise constant functions $[0,T]\to X_{\D_\bu^m}^0$ and $[0,T]\to X_{\D_\bu^f}$; so, for example, if $\bu=(\bu^k)_{k\in\{0,\ldots,N\}}\in (X_{\D_\bu^m}^0)^{N+1}$, we set $\bu(0)=\bu^0$ and $\bu(t)=\bu^{k+1}$ for all $t\in (t_k,t_{k+1}]$ and $k\in\{0,\ldots,N-1\}$.
Moreover, if $E$ is a vector space and $f:[0,T]\to E$ is piecewise constant on the time discretization with $f_k=f_{|(t_{k-1},t_k]}$ and $f_0=f(0)$, the discrete time derivative of $f$ is
\begin{equation}\label{def_deltat}
\delta_t f (t) = \frac{f_{k+1} - f_k}{\dtk}\mbox{ for all  $t\in (t_k,t_{k+1}]$, $k\in\{0,\ldots,N-1\}$}. 
\end{equation}
This discretization leads to the implicit Euler time stepping in the following gradient scheme formulation. 
  
\subsection{Gradient scheme}

For $\sigma \in \faces_{\D_\bu}$, the displacement average on each side of $\sigma$, and the displacement normal jump average, are defined by:
$$
{\bf u}_\sigma^\aa = {1\over |\sigma|}\int_\sigma \gamma_\aa{\bf u}(\x)  \d\sigma \quad (\aa\in\{+,-\}),\quad
\jump{\bu}_{n,\sigma}  =  {\bf u}_\sigma^+\cdot {\bf n}^+ + {\bf u}_\sigma^-\cdot {\bf n}^-.
$$
The global displacement normal jump reconstruction $\jump{\bu}_{n,\faces}: \G\to\R$ is defined such that, for any $\sigma\in\faces_{\D_\bu}$, ${(\jump{\bu}_{n,\faces})}_{|_\sigma} = \jump{\bu}_{n,\sigma}$.

The gradient scheme for \eqref{eq:model} consists in writing a discrete weak formulation obtained, after a formal integration by parts in space, by replacing the continuous operators by their discrete counterparts: find $p = (p_m, p_f) \in (X^0_{\D_p})^{N+1}$, $\bu \in (X^0_{\D_\bu^m})^{N+1}$, and $\lambda\in ({C}_{\D_\bu^f})^{N+1}$, such that
\begin{subequations}\label{eq:GS}
\begin{equation}
\begin{aligned}
  {}&\int_0^T \int_\Omega \( (\delta_t \phi_\D)  \Pi_{\D_p}^m \varphi_m 
  +  {\K_m \over \eta} \nabla_{\D_p}^m p_m \cdot   \nabla_{\D_p}^m \varphi_m \) ~\d t\d\x 
   + \int_0^T \int_\Gamma (\delta_t d_{f,\D_\bu})\Pi_{\D_p}^f \varphi_f ~\d t\d\sigma\\
   &+ \int_0^T \int_\Gamma  \dsp {d_{f,\D_\bu}^{\,3} \over 12 \eta} \,\, \nabla_{\D_p}^f p_f \cdot   \nabla_{\D_p}^f \varphi_f  ~\d t\d\sigma 
   + \sum_{\aa \in \{+,-\}}
  \int_0^T \int_\G   \Lambda_f \jump{p}^\aa_{\D_p}   \jump{\varphi}^\aa_{\D_p}  ~\d t\d\sigma  \\
  & =  \int_0^T \int_\Omega h_m \Pi_{\D_p}^m \varphi_m ~\d t\d\x  + \int_0^T \int_\Gamma h_f \Pi_{\D_p}^f \varphi_f ~\d t\d\sigma\qquad\forall \varphi = (\varphi_m, \varphi_f) \in (X_{\D_p}^0)^{N+1},
  \label{GD_hydro}
\end{aligned}
\end{equation}
\medskip
\begin{equation}
\begin{aligned}
     \int_\Omega \Big( \bbsig(\bu^k) : \bbeps(\bv)
    -  {}& b ~\Pi_{\D_p}^m p^k_m~  \div\,\bv\Big)  \d\x 
   +  \int_\Gamma   \lambda^k ~ \jump{\bv}_n~\d\sigma\\
     &
     +  \int_\Gamma  \Pi_{\D_p}^f p^k_f~  \jump{\bv}_{n,\faces} ~\d\sigma 
  = \int_\Omega  \mathbf{f} \cdot  \bv ~\d\x \qquad\forall \bv \in X^0_{\D_\bu^m}\,,\;\forall k\in\{0,\ldots,N\},
\end{aligned}
\label{GD_meca}
\end{equation}
\begin{equation}
     \int_\Gamma (\mu - \lambda^k) \jump{\bu^{k}}_{n}~ \d\sigma \leq 0\qquad \forall\mu \in {C}_{\D_\bu^f}\,,\forall k\in\{0,\ldots,N\}, 
  \label{GD_meca_var}
\end{equation}
with the closure equations
\begin{equation}
\begin{aligned}
  &  \phi_{\D} - \Pi_{\D_p}^m  \phi_m^0 = b ~\div(\bu-\bu^0) + {1\over M} \Pi_{\D_p}^m (p_m-p_m^{0}),\\
  & d_{f,\D_\bu} = d_0 - \jump{\bu}_{n,\faces}, 
\end{aligned}
  \label{GD_closures}
\end{equation}
\end{subequations}
{from which $\delta_t \phi_\D$ and $\delta_t d_{f,\D_\bu}$ are deduced using the definition \eqref{def_deltat} of the discrete time derivative}. 
The initial conditions on the pressures and porosity are taken into account by setting $p^0_{\omega} = I^\omega_{\D_p} \ol{p}_{0,\omega}$ ($\omega\in \{m,f\}$), $\phi_m^0 = J_{\D_p}^m \ol{\phi}^0$. We note that \eqref{GD_meca}--\eqref{GD_meca_var} could be equivalently written using integrals over time (considering $\bv=\bv^k$, multiplying by $\dtk$ and summing over $k$), provided the equations on the initial displacement and multiplier, corresponding to $k=0$ in \eqref{GD_meca}--\eqref{GD_meca_var}, remain imposed separately.

We note the following local reformulation of the variational condition \eqref{GD_meca_var}, which corresponds to \cite[Lemma 4.1]{BDMP:21} with friction coefficient $F=0$.

\begin{lemma}[Local contact conditions]\label{lem:local.coulomb}
Let $\lambda \in (C_{\D_\bu^f})^{N+1}$ and $\bu\in (X_{\D^m_\bu}^0)^{N+1}$. Then $(\bu,\lambda)$ satisfy the variational inequality \eqref{GD_meca_var} if and only if the following local contact conditions hold on $[0,T]$ and for any $\sigma\in \faces_{\D_\bu}$: $\lambda_{\sigma} \geq 0$, $\jump{\bu}_{n,\sigma} \leq 0$ and $\jump{\bu}_{n,\sigma}  \lambda_{\sigma} = 0$.
\end{lemma}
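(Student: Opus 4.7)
The plan is to exploit the fact that both $\mu$ and $\lambda^k$ are piecewise constant on the partition $\faces_{\D_\bu}$, while $\jump{\bu^k}_n$ enters the inequality \eqref{GD_meca_var} only through its face-wise integral. I would first observe that, by the very definition of the averaged jump $\jump{\bu}_{n,\sigma}$, one has
$$
\int_\sigma \jump{\bu^k}_n \,\d\sigma = |\sigma|\,\jump{\bu^k}_{n,\sigma},
$$
so splitting the integral over $\Gamma$ into a sum over faces rewrites the variational inequality equivalently as
$$
\sum_{\sigma\in\faces_{\D_\bu}} (\mu_\sigma - \lambda_\sigma^k)\,|\sigma|\,\jump{\bu^k}_{n,\sigma} \le 0 \quad \forall\mu\in C_{\D_\bu^f}.
$$
Since the time steps decouple, it suffices to work at a fixed $k$, and to establish the claimed local conditions at each $\sigma$.

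For the direct implication, I would use specific test functions. First, picking the indicator multiplier $\mu = \lambda^k + e_\sigma$ (where $e_\sigma$ has entry $1$ on $\sigma$ and $0$ elsewhere), which lies in $C_{\D_\bu^f}$ because $\lambda^k$ already does, reduces the inequality to $|\sigma|\,\jump{\bu^k}_{n,\sigma}\le 0$, yielding $\jump{\bu^k}_{n,\sigma} \le 0$. Then choosing $\mu = 0 \in C_{\D_\bu^f}$ gives $\sum_\sigma \lambda_\sigma^k |\sigma|\,\jump{\bu^k}_{n,\sigma} \ge 0$; but each summand is a product of a nonnegative number $\lambda_\sigma^k$ and a nonpositive number $\jump{\bu^k}_{n,\sigma}$, hence nonpositive. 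A sum of nonpositive numbers being nonnegative forces each to vanish, giving the complementarity $\lambda_\sigma^k \jump{\bu^k}_{n,\sigma} = 0$. The sign condition $\lambda_\sigma^k \ge 0$ is part of the assumption $\lambda^k\in C_{\D_\bu^f}$.

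For the converse, assume the three local conditions at every $\sigma$. Then for any $\mu\in C_{\D_\bu^f}$,
$$
\sum_{\sigma} (\mu_\sigma - \lambda_\sigma^k)\,|\sigma|\,\jump{\bu^k}_{n,\sigma} = \sum_\sigma \mu_\sigma\,|\sigma|\,\jump{\bu^k}_{n,\sigma} - \sum_\sigma \lambda_\sigma^k\,|\sigma|\,\jump{\bu^k}_{n,\sigma}.
$$
The second sum is zero by complementarity, while each term in the first sum is a product of $\mu_\sigma \ge 0$ and $\jump{\bu^k}_{n,\sigma}\le 0$, hence nonpositive, giving the desired inequality.

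There is no real obstacle here; the argument is essentially an elementary KKT-type characterization of the projection onto the positive orthant, and the only slightly nontrivial point is the compatibility between the integral formulation \eqref{GD_meca_var} and the face-averaged jump $\jump{\bu}_{n,\sigma}$, which follows at once from the definition of $\bu^\pm_\sigma$ as face averages.
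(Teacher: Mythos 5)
Your proof is correct, and since the paper itself gives no argument but merely cites \cite[Lemma 4.1]{BDMP:21} with $F=0$, you have supplied the missing details. The key observations -- that $\int_\sigma\jump{\bu^k}_n\,\d\sigma=|\sigma|\,\jump{\bu^k}_{n,\sigma}$ because $\n^+$ is constant on each planar face $\sigma$, and that testing with $\mu=\lambda^k+e_\sigma$ and then $\mu=0$ isolates each face -- are exactly the natural KKT-type decoupling used in such references, so this is essentially the same approach made explicit.
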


\section{Convergence analysis}\label{sec:convergence}

\subsection{Assumptions and statement of the result}\label{sec:assum.theorem}

We assume in the following analysis that the gradient discretizations for the flow are \emph{coercive} in the sense of \cite{GDM-poromeca-disc} but without reference to the discrete trace operator (which is not used in the single-phase model). This means that the norm \eqref{def:XDp.norm} satisfies the following uniform Poincar\'e inequality: there exists $c^\star>0$ independent of $\D_p$ such that, for all $v=(v_m,v_f)\in X_{\D_p}^0$,
\begin{equation}\label{eq:GD.coercif}
\|\Pi^m_{\D_p} v_m\|_{L^2(\Omega)} + \|\Pi^f_{\D_p} v_f \|_{L^2(\Gamma)} 
\le c^\star\|v\|_{\D_p}. 
\end{equation}

The fracture network is assumed to be such that the Korn inequality holds on $\U_0$ (which is the case if the boundary of each connected component of $\Omega{\backslash}\Gamma$ has an intersection with $\partial\Omega$ that has a nonzero measure, see e.g.~\cite[Section~1.1]{ciarlet}); this ensures that the following expression defines a norm on $\U_0$, which is equivalent to the $H^1$-norm:
$$
\|\bv\|_{\U_0} = \|\bbeps(\bv)\|_{L^2(\Omega,\R^{d\times d})}.
$$

We also assume that $X_{\D_\bu^m}^0$ satisfies the following discrete inf-sup condition, in which the infimum and supremum are taken over nonzero elements of $X_{\D_\bu^f}$ and $X_{\D_\bu^m}^0$, respectively, and $c_\star$ does not depend on the mesh:
\begin{equation}\label{infsup}
\inf_{\boldsymbol{\mu}} \sup_{\bv} {\int_\Gamma \mu  ~\jump{\bv}_n \over \|{\bv}\|_{\U_0} \|\mu\|_{{\Hminushalfn}}} \geq c_{\star}  > 0. 
\end{equation}
We note that this inf-sup condition holds if $\D_\bu^m$ corresponds to the conforming $\P_1$ bubble or $\P_2$ finite elements on a regular triangulation of $\Omega\backslash\Gamma$, and $\faces_{\D_\bu}$ is made of the traces on $\Gamma$ of this triangulation, see \cite{BR2003} and the discussion in \cite[Section 4.3]{BDMP:21}. 

\begin{theorem}[Convergence of the gradient scheme]\label{th:convergence}
Let $(\D_p^l)_{l\in\N}$, $(X^l_{\D_\bu})_{l\in\N}$, and $\{(t^l_n)_{n=0}^{N^l}\}_{l\in\N}$ be sequences of gradient discretizations, contact mechanics conforming spaces, and time steps. We assume that the coercivity and inf--sup inequality \eqref{eq:GD.coercif}--\eqref{infsup} hold uniformly with respect to $l$, and that these sequences are consistent and limit-conforming as per \cite[Section 3.1]{GDM-poromeca-disc} (disregarding the properties associated with the reconstructed trace operators), and satisfy the following compactness property in the matrix:
\begin{equation}\label{eq:compact.matrix}
\begin{aligned}
&\text{For all $(v^l)_{l\in\N}$ with $v^l\in X^0_{\D_p^l}$ and $(\|v^l\|_{\D_p^l})_{l\in\N}$ bounded,}\\
&\text{$(\Pi^m_{\D_p^l}v^l)_{l\in\N}$ is relatively compact in $L^2(\Omega)$.}
\end{aligned}
\end{equation}
Then for each $l$ there exists a solution $(p^l,\bu^l,\lambda^l)$ to the scheme \eqref{eq:GS} with $(\D_p,X_{\D_\bu},(t_n)_{n=0,\ldots,N})=(\D_p^l,X^l_{\D_\bu},(t^l_n)_{n=0}^{N^l})$ and, along a subsequence as $l\to+\infty$,
\begin{subequations}\label{eq:convergences}
\begin{alignat}{3}
  &\Pi^m_{\D_p^l} p^l_m \weakto \ol{p}_m &\quad& \mbox{weakly in } L^2(0,T;L^2(\Omega)), \label{eq:conv.pm}\\
  &\nabla^m_{\D_p^l} p^l_m \weakto \nabla\ol{p}_m &\quad& \mbox{weakly in } L^2(0,T;L^2(\Omega))^d, \label{eq:conv.grad.pm}\\
  &\Pi^f_{\D_p^l} p^l_f \weakto \ol{p}_f &\quad& \mbox{weakly in } L^2(0,T;L^2(\Gamma)), \label{eq:conv.pf}\\
  &d_{f,\D_\bu^l}^{\,\nf32}\grad_{\D_p}^f p_f^l\weakto \ol{d}_f^{\,\nf32}\nabla_\tau\ol{p}_f &\quad& \mbox{weakly in } L^2(0,T;L^2(\Gamma)^{d-1}), \label{eq:conv.gradpf}\\
  &\jump{p^l}^\aa_{\D_p^l}\weakto \jump{\ol{p}}^\aa &\quad& \mbox{weakly in }L^2((0,T)\times\Gamma), \label{eq:conv.jump.p}\\
  &\bu^l \weakto \ol{\bu} &\quad& \mbox{weakly-$\star$ in } L^\infty(0,T;\U_0),\label{eq:conv.u}\\
  &\phi_{\D^l} \weakto \ol{\phi}_m &\quad& \mbox{weakly-$\star$ in } L^\infty(0,T;L^2(\Omega)),\label{eq:conv.phi}\\
  &d_{f,\D_\bu^l} \rightarrow \ol{d}_f &\quad& \mbox{in } L^\infty(0,T;L^p(\Gamma)) \mbox{ for all } 2 \leq p < 4,\label{eq:conv.df}\\
  &\lambda^l \weakto \ol{\lambda} &\quad& \mbox{weakly in } L^2(0,T;{\Hminushalfn}),\label{eq:conv.lambda}
\end{alignat}
\end{subequations}
where $(\ol{p}=(\ol{p}_m,\ol{p}_f),\ol{\bu},\ol{\lambda})$ is a weak solution to \eqref{eq:model} in the sense of Definition \ref{def:weak.solution} (with $\ol{\phi}_m$, $\ol{d}_f$ given in this definition).
\end{theorem}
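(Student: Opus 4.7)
The overall strategy is to first produce discrete solutions with uniform estimates, extract weak subsequential limits, promote the fracture aperture to strong convergence (since it carries the only genuine nonlinearity of the flow equation), pass to the limit in each discrete equation, and finally handle the contact inequality with a weak--strong trick. For Step~1 (existence and energy estimates), I would use a topological degree argument as in \cite{BDMP:21} to get existence of $(p^l,\bu^l,\lambda^l)$ once uniform estimates are available. These come from testing \eqref{GD_hydro} with $\varphi=p$ and combining with the time increment of \eqref{GD_meca} tested on $\bv=\bu^{k+1}-\bu^k$, summed in $k$. Pressure--displacement cross terms cancel as in standard poromechanical analyses; the local contact conditions of Lemma \ref{lem:local.coulomb} yield $\int_\Gamma\lambda^k\jump{\bu^k}_{n,\faces}~\d\sigma=0$, so multiplier terms drop out. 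Together with \eqref{eq:GD.coercif} and the Sobolev embedding $H^{\nf12}(\Gamma)\hookrightarrow L^p(\Gamma)$ ($p\le 4$), this gives the uniform bounds underlying \eqref{eq:conv.pm}--\eqref{eq:conv.grad.pm}, \eqref{eq:conv.pf}, \eqref{eq:conv.jump.p}, \eqref{eq:conv.u}, \eqref{eq:conv.phi}, and for $d_{f,\D_\bu^l}$ in $L^\infty(0,T;L^p(\Gamma))$. Using \eqref{GD_meca} as a stability identity for $\lambda^l$ together with the inf--sup condition \eqref{infsup} then produces the $L^2(0,T;H^{-\nf12}(\Gamma))$-bound underlying \eqref{eq:conv.lambda}.

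In Step~2, limit-conformity of the gradient discretizations together with the estimates of Step~1 will give, along a subsequence, most of the weak convergences in \eqref{eq:convergences}, while the compactness assumption \eqref{eq:compact.matrix} upgrades $\Pi^m_{\D_p^l}p_m^l$ to strong convergence in $L^2((0,T)\times\Omega)$. The strong convergence \eqref{eq:conv.df} is the crucial ingredient and will be obtained \emph{via} an Aubin--Simon-type argument adapted from \cite{GDM-poromeca-cont,GDM-poromeca-disc}: time regularity of $\phi_{\D^l}$ deduced from \eqref{GD_hydro} transfers, through the closure law \eqref{GD_closures}, into a uniform time-translate control of $\div\bu^l$, which combined with the $L^\infty(\U_0)$-bound and continuity of the trace/jump operators yields compactness of $\jump{\bu^l}_{n,\faces}$ in $L^\infty(0,T;L^p(\Gamma))$ for $p<4$. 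Passing to the limit in \eqref{GD_closures} then identifies both $\ol{d}_f=d_0-\jump{\ol{\bu}}_n$ and \eqref{weak:closure.phi}.

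With \eqref{eq:conv.df} in hand, Step~3 passes to the limit in all terms of \eqref{GD_hydro}: the cubic nonlinearity $d_{f,\D_\bu^l}^{\,3}\nabla^f_{\D_p^l}p_f^l$ is handled by strong $L^\infty(L^p)$--weak $L^2$ duality, which simultaneously identifies \eqref{eq:conv.gradpf}; a discrete integration by parts in time treats the $\delta_t\phi_{\D^l}$ and $\delta_t d_{f,\D_\bu^l}$ terms and recovers the initial-condition contributions in \eqref{weak:flow}. The mechanical equation \eqref{GD_meca} being linear in $(\bu^l,p^l,\lambda^l)$, its limit \eqref{weak:meca.alternate} follows from the strong convergence of $\Pi^m_{\D_p^l}p_m^l$ and the remaining weak convergences; Remark \ref{rem:meca.integral} then yields the pointwise-in-time form \eqref{weak:meca}.

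Step~4 is the main obstacle. Preservation of the duality sign under weak limits gives $\ol{\lambda}\in L^2(0,T;C_f)$. The delicate point is making sense of $\int_0^T\!\int_\Gamma\lambda^l\jump{\bu^l}_n~\d t\d\sigma$, a product of two sequences that converge only weakly in dual Sobolev spaces. My plan is a weak--strong trick: test the discrete mechanical equation \eqref{GD_meca} with $\bv=\bu^l$ to rewrite
\begin{equation*}
\int_0^T\!\!\int_\Gamma\lambda^l\jump{\bu^l}_n\,\d t\d\sigma = -\int_0^T\!\!\int_\Omega\bbsig(\bu^l)\!:\!\bbeps(\bu^l)\,\d t\d\x+L(\bu^l,p^l),
\end{equation*}
where $L(\bu^l,p^l)$ collects linear terms whose limits are determined by Steps~2--3; in particular $\int_0^T\!\int_\Gamma\Pi^f_{\D_p^l}p_f^l\jump{\bu^l}_{n,\faces}$ converges because $\jump{\bu^l}_{n,\faces}$ converges strongly by \eqref{eq:conv.df}. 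Weak lower semicontinuity of the convex elastic energy and the continuous identity \eqref{weak:meca.alternate} tested with $\bv=\ol{\bu}$ then deliver
\begin{equation*}
\limsup_{l\to\infty}\int_0^T\!\!\int_\Gamma\lambda^l\jump{\bu^l}_n\,\d t\d\sigma \le \int_0^T\!\HGbracket{\ol{\lambda}}{\jump{\ol{\bu}}_n}\,\d t.
\end{equation*}
For any $\ol{\mu}\in L^2(0,T;C_f)$ I would approximate it by strongly convergent piecewise-constant $\mu^l\in(C_{\D_\bu^f})^{N^l+1}$ (built through an $L^2$-projection onto $\faces_{\D_\bu^l}$, whose density in $L^2(0,T;H^{-\nf12}(\Gamma))$ follows from limit-conformity and mesh refinement); since $\jump{\bu^l}_n\weakto\jump{\ol{\bu}}_n$ weakly-$\star$ in $L^\infty(0,T;H^{\nf12}(\Gamma))$, this yields $\lim_l\int_0^T\!\int_\Gamma\mu^l\jump{\bu^l}_n\,\d t\d\sigma=\int_0^T\!\HGbracket{\ol{\mu}}{\jump{\ol{\bu}}_n}\,\d t$. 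Taking $\liminf$ in the discrete inequality \eqref{GD_meca_var} and combining the two bounds yields \eqref{weak:multiplier.alternate}, hence \eqref{weak:multiplier}. The fact that the elastic-energy bound above is only an inequality is precisely what forces the use of the one-sided structure of the contact condition, and this is what makes Step~4 the main conceptual obstacle of the proof.
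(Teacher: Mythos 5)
Your overall plan follows the paper's broad strategy (uniform estimates, compactness of $d_{f,\D_\bu}$, passage to the limit, weak--strong trick for the contact inequality), but there is a genuine gap in Step~4 which you try to paper over in Step~2: you claim that assumption \eqref{eq:compact.matrix} ``upgrades $\Pi^m_{\D_p^l}p_m^l$ to strong convergence in $L^2((0,T)\times\Omega)$''. That is not available. Assumption \eqref{eq:compact.matrix} is purely \emph{spatial}; to get space--time strong $L^2$ compactness you would also need a time-translate estimate on $\Pi^m_{\D_p^l}p_m^l$, and no such estimate exists in the scheme (only $\phi_{\D^l}$ and $d_{f,\D_\bu^l}$ satisfy time-translate bounds through the mass conservation equations). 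In the incompressible case $M=\infty$ the closure law \eqref{GD_closures} does not even constrain $p_m$ in time. Consequently the cross term $\int_0^T\!\int_\Omega b\,\Pi^m_{\D_p^l}p_m^l\,\div\bu^l$ appearing when you test \eqref{GD_meca} with $\bv=\bu^l$ is a product of two sequences that both converge only weakly, and your argument for it collapses. The paper handles precisely this term differently: it substitutes the closure law \eqref{GD_closures} to rewrite $-b\,\Pi^m_{\D_p}p_m\,\div\bu$ as $\frac1M(\Pi^m_{\D_p}p_m)^2-\phi_\D\Pi^m_{\D_p}p_m+\mbox{initial-data terms}$, treats the square term by weak lower semicontinuity, and handles the mixed term $\phi_\D\Pi^m_{\D_p}p_m$ by a dedicated ``partial weak/strong compactness'' lemma (Lemma~\ref{lem:ws.compactness}), which only requires spatial strong compactness of $\Pi^m_{\D_p^l}p_m^l$ (from \eqref{eq:compact.matrix}) and time-compactness-but-space-weakness of $\phi_{\D^l}$ (from the uniform-in-time weak-$L^2$ convergence of Proposition~\ref{prop_compactnessdfsf}). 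This decomposition is the core idea missing from your proposal.

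Two smaller points. First, your narrative for \eqref{eq:conv.df} goes via time-translate control of $\div\bu^l$ ``transferred'' from $\phi_{\D^l}$; this is shaky because when $M<\infty$ the closure law couples $\phi_\D$, $\div\bu$ \emph{and} $p_m$, so controlling $\phi_\D$ does not control $\div\bu$, and even if it did, control of $\div\bu$ alone does not give compactness of the boundary jump $\jump{\bu}_{n,\faces}$. The paper instead obtains time-translate estimates directly on $d_{f,\D_\bu}=d_0-\jump{\bu}_{n,\faces}$ from the \emph{fracture} conservation equation in \eqref{GD_hydro} (Proposition~\ref{prop_timetranslates}), combined with space-translate control coming from the compact Sobolev trace $\U_0\to L^2(\Gamma)$. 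Second, your construction of the approximating $\mu^l\in(C_{\D_\bu^f})^{N}$ by ``$L^2$-projection onto $\faces_{\D_\bu^l}$'' is ill-defined: a generic $\ol{\mu}\in C_f\subset H^{-\nf12}(\Gamma)$ is not a function, so face averages do not make sense, and an $H^{-\nf12}$-orthogonal projection would not preserve the sign constraint. The paper first proves that $C^0(\Gamma;[0,\infty))$ is dense in $C_f$ (Lemma~\ref{lem:density.Cf}), and then takes face averages of the continuous approximant. Both of these are fixable, but the handling of the $p_m\,\div\bu$ term is the genuine missing ingredient.
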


\begin{remark}[GDM properties and existence of a solution to the weak formulation]
As discussed in \cite{gdm,GDM-poromeca-disc}, many numerical methods -- including those used for the tests in Section \ref{sec:num.example} (Hybrid Finite Volumes for the flow and a conforming finite-element method for the mechanics) -- fit the GDM framework and satisfy the required properties.
As a consequence of the proof of Theorem \ref{th:convergence}, which does not require to assume the existence of a solution to the continuous problem \eqref{eq:model}, we infer the existence of a weak solution to this model.
\end{remark}

In the following, we denote $a\lsim b$ for $a\le Cb$ with $C$ depending only on the data, $c^\star$ and $c_\star$, but not depending on $l$ or the considered functions.
For legibility, we also often drop the explicit mention of the index $l$ in the sequences.

\subsection{Energy estimates and existence of a solution}

The following energy estimates and existence of a solution to the scheme can be established as in~\cite[Theorem 4.2 and Theorem 4.4]{BDMP:21}, which correspond to the more challenging situation of a two-phase flow and a Coulomb friction contact model; fixing, in this reference, the friction coefficient $F$ to $0$, and the wetting saturation $S^{\rm w}$ to $1$ we recover the single-phase frictionless model \eqref{eq:model}.

\begin{theorem}[Energy estimates for \eqref{eq:GS}]
If $(p,\bu,\lambda)$ solves the gradient scheme \eqref{eq:GS}, then there exists $C\ge 0$ depending only on the data in Assumptions~\ref{first.hyp}--\ref{last.hyp} (except the Biot coefficient $b$ and the Biot modulus $M$), and on $c^\star,c_\star$, such that
\begin{equation}\label{apriori.est}
\begin{aligned}
\|\grad_{\D_p}^m p_m\|_{L^2((0,T)\times\Omega)} \le C, 
&\quad& \| d_{f,\D_\bu}^{\nf 3 2} \grad_{\D_p}^f p_f\|_{L^2((0,T)\times\Gamma)} \le C, \\
\| \jump{p}^\aa_{\D_p} \|_{L^2((0,T)\times\Gamma)} \le C,
&\quad& \\
\frac{1}{\sqrt{M}}\|\Pi^m_{\D_p} p_m\|_{L^\infty(0,T;L^2(\Omega))} \le C, 
&\quad&
\max_{t\in [0,T]}\| \bu(t)\|_{\U_0} \le C,\\
\|d_{f,\D_\bu}\|_{L^\infty(0,T;L^4(\Gamma))}\le C, &\quad& \|\lambda\|_{L^2(0,T;{\Hminushalfn})}\le C.
\end{aligned}
\end{equation}
\label{th:energy_estimates}
\end{theorem}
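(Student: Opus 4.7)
The plan is to follow the classical energy-technique for discrete poromechanics, specialized here to a single-phase, frictionless contact model; the structure directly parallels \cite{BDMP:21} (where the result is obtained in the harder two-phase Coulomb-friction setting). First, I would take the test function $\varphi=p$ in the flow equation \eqref{GD_hydro}, but restricted to a time subinterval $(0,t_{N+1})$, so as to produce the dissipative terms $\int_0^{t_{N+1}}\!\!\int_\Omega\frac{\K_m}{\eta}|\grad^m_{\D_p}p_m|^2$, $\int_0^{t_{N+1}}\!\!\int_\Gamma\frac{d_{f,\D_\bu}^3}{12\eta}|\grad^f_{\D_p}p_f|^2$, $\sum_\aa\int_0^{t_{N+1}}\!\!\int_\Gamma\Lambda_f(\jump{p}^\aa_{\D_p})^2$, plus two storage terms $\sum_k\dtk\int_\Omega(\delta_t\phi_\D)\Pi^m_{\D_p}p_m^{k+1}$ and $\sum_k\dtk\int_\Gamma(\delta_t d_{f,\D_\bu})\Pi^f_{\D_p}p_f^{k+1}$. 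In parallel, I would test \eqref{GD_meca} at time $t_{k+1}$ with $\bv=\bu^{k+1}-\bu^k$ and sum over $k=0,\ldots,N$; the convexity of the elastic form $\Phi(\bu)=\tfrac12\int_\Omega\bbsig(\bu):\bbeps(\bu)$ yields $\sum_k\int_\Omega\bbsig(\bu^{k+1}):\bbeps(\bu^{k+1}-\bu^k)\ge\Phi(\bu^{N+1})-\Phi(\bu^0)$, producing the desired $L^\infty$-in-time elastic energy.

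The crucial step is then the addition of the flow and mechanics identities. The closure law \eqref{GD_closures} gives $\delta_t\phi_\D=b\,\div(\delta_t\bu)+\frac{1}{M}\Pi^m_{\D_p}(\delta_t p_m)$ and $\delta_t d_{f,\D_\bu}=-\delta_t\jump{\bu}_{n,\faces}$. Therefore the matrix Biot coupling $b\int_\Omega\Pi^m_{\D_p}p_m\div(\delta_t\bu)$ appearing in the flow test cancels exactly the analogous term in the mechanics test, and the fracture pressure--aperture terms cancel similarly. What survives is $\frac{1}{M}\sum_k\int_\Omega\Pi^m_{\D_p}(\delta_t p_m)\Pi^m_{\D_p}p_m^{k+1}$, which by convexity gives $\frac{1}{2M}\|\Pi^m_{\D_p}p_m^{N+1}\|_{L^2}^2-\frac{1}{2M}\|\Pi^m_{\D_p}p_m^0\|_{L^2}^2$. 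The contact term in the mechanics test reads $\sum_k\int_\Gamma\lambda^{k+1}\jump{\bu^{k+1}-\bu^k}_n$; invoking the local complementarity $\lambda_\sigma^{k+1}\jump{\bu^{k+1}}_{n,\sigma}=0$ from Lemma \ref{lem:local.coulomb} reduces this to $-\sum_k\int_\Gamma\lambda^{k+1}\jump{\bu^k}_n$, which is nonnegative since $\lambda^{k+1}\ge 0$ and $\jump{\bu^k}_n\le 0$, and can thus be discarded when bounding from above.

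With these cancellations, bounding the source contributions by Cauchy--Schwarz/Young, using coercivity \eqref{eq:GD.coercif} to control $\|\Pi^m_{\D_p}p_m\|_{L^2}$ and $\|\Pi^f_{\D_p}p_f\|_{L^2}$ by $\|p\|_{\D_p}$, and using Korn's inequality on $\U_0$ to absorb $\int_\Omega\mathbf{f}\cdot\bu^{N+1}$ into the elastic energy, I would obtain all the primal bounds: the space-time estimates on $\grad^m_{\D_p}p_m$, $d_{f,\D_\bu}^{\nf32}\grad^f_{\D_p}p_f$, $\jump{p}^\aa_{\D_p}$, the $\frac{1}{\sqrt{M}}L^\infty(L^2)$ bound on $\Pi^m_{\D_p}p_m$, and the $L^\infty(\U_0)$ bound on $\bu$ (taking the maximum over $N$). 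The initial displacement $\bu^0$ is controlled by running the same argument on the stationary problem at $t=0$. To obtain the $L^\infty(0,T;L^4(\Gamma))$ bound on $d_{f,\D_\bu}=d_0-\jump{\bu}_{n,\faces}$, I would rely on the continuous embedding $H^{1/2}(\Gamma)\hookrightarrow L^4(\Gamma)$ (valid in both $d=2$ and $d=3$), combined with the $H^1(\Omega\setminus\ol\Gamma)\to H^{1/2}(\Gamma)$ trace theorem applied to each side of $\bu$, noting that the face-average operator defining $\jump{\bu}_{n,\faces}$ from $\jump{\bu}_n$ is $L^p$-stable.

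Finally, the estimate on the Lagrange multiplier follows from the inf--sup condition \eqref{infsup}: for each time step, $c_\star\|\lambda^k\|_{H^{-\nf12}(\Gamma)}\le\sup_{\bv}\frac{\int_\Gamma\lambda^k\jump{\bv}_n}{\|\bv\|_{\U_0}}$, and the mechanics equation \eqref{GD_meca} allows us to express $\int_\Gamma\lambda^k\jump{\bv}_n$ in terms of $\bbsig(\bu^k)$, $\Pi^m_{\D_p}p_m^k$, $\Pi^f_{\D_p}p_f^k$ and $\mathbf{f}$; bounding each duality against $\|\bv\|_{\U_0}$ and squaring/integrating in time yields the $L^2(0,T;H^{-\nf12}(\Gamma))$ bound on $\lambda$. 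The main obstacle I expect is bookkeeping the precise discrete cancellations -- particularly the matching between the face-wise-constant jump $\jump{\bu}_{n,\faces}$ that drives the fracture aperture and the full normal jump $\jump{\bu}_n$ appearing in the contact integral with $\lambda$ -- and controlling $d_{f,\D_\bu}$ in $L^4$, which is needed for the Poiseuille term and is the main technical ingredient distinguishing this nonlinear coupling from the linear poroelastic case. Existence of a discrete solution is then obtained from these a priori estimates via a topological degree or fixed-point argument, exactly as in \cite[Theorem 4.4]{BDMP:21}.
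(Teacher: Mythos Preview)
Your proposal is correct and mirrors exactly what the paper does: the paper does not give an independent proof but simply refers to \cite[Theorem 4.2 and Theorem 4.4]{BDMP:21}, noting that the single-phase frictionless case is recovered by setting the friction coefficient to $0$ and the wetting saturation to $1$ there. The energy argument you sketch---test the flow with $p$, the mechanics with $\bu^{k+1}-\bu^k$, cancel the Biot and fracture-pressure couplings via the closure laws, discard the nonnegative contact term using Lemma~\ref{lem:local.coulomb}, and recover the multiplier bound from the inf--sup condition---is precisely that of \cite{BDMP:21} specialized to this setting.
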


\begin{remark}[Assumptions on fracture width and porosity]
Unlike those in \cite{GDM-poromeca-cont,GDM-poromeca-disc}, these estimates hold here without any assumption of lower bound on $\phi_\D$ or $d_{f,\D_\bu}$ (the latter being anyway bounded below by $d_0$ owing to its definition \eqref{GD_closures} and to Lemma \ref{lem:local.coulomb}). As a consequence, when invoking in the next section arguments similar to those in \cite{GDM-poromeca-cont,GDM-poromeca-disc}, we do not have to impose such lower bounds, and the arguments remain valid purely under the assumptions of Theorem \ref{th:convergence}.
\end{remark}

{
\begin{remark}[Time-dependent source term]
A time-dependent source term $\mathbf{f}$ could as well be considered without any major difficulty, provided the inertia term $\rho_0 \partial_{t}^2 \overline {\bm u}$ (with $\rho_0 > 0$ the material density) is added to the first equation of \eqref{eq_edp_meca} in order to establish the estimates in Theorem \ref{apriori.est}.
\end{remark}
}

\begin{theorem}[Existence of a discrete solution]\label{th:existence}
Under Assumptions~\ref{first.hyp}--\ref{last.hyp}, \eqref{eq:GD.coercif}, \eqref{infsup}, there exists at least one solution of the gradient scheme \eqref{eq:GS}. 
\end{theorem}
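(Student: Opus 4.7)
The plan is to proceed by induction on the time step $k\in\{0,\ldots,N\}$, establishing the existence of $(p^{k+1},\bu^{k+1},\lambda^{k+1})$ given $(p^k,\bu^k,\lambda^k)$ (the base case $k=0$ being treated similarly, using the definitions of $p^0_\omega$ and $\phi_m^0$ as interpolants of the initial data and solving the stationary mechanics problem). At a fixed time step, the system \eqref{eq:GS}--\eqref{GD_closures} is a coupled nonlinear finite-dimensional problem whose solvability I would establish via a topological fixed-point argument based on Brouwer's theorem, together with the a priori bounds of Theorem~\ref{th:energy_estimates}.

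The first step is to reformulate the variational inequality \eqref{GD_meca_var} locally using Lemma~\ref{lem:local.coulomb}: on each face $\sigma\in\faces_{\D_\bu}$, the complementarity $\lambda_\sigma\ge 0$, $\jump{\bu}_{n,\sigma}\le 0$, $\lambda_\sigma\jump{\bu}_{n,\sigma}=0$ is equivalent to the projection equation $\lambda_\sigma = (\lambda_\sigma - r\jump{\bu}_{n,\sigma})^+$ for any $r>0$. This turns the inequality into a (Lipschitz) continuous equation in $(\bu,\lambda)$, which combined with the continuous equations \eqref{GD_hydro}--\eqref{GD_meca} and the closure relations \eqref{GD_closures}, yields a continuous finite-dimensional system $F(p^{k+1},\bu^{k+1},\lambda^{k+1})=0$.

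Next, I would construct a continuous homotopy $F_s$, $s\in[0,1]$, with $F_1=F$ and $F_0$ a decoupled, coercive problem (for example, $s=0$ corresponds to dropping the coupling terms $b\,p_m \mathbb I$ and $p_f\n^\pm$, freezing the fracture conductivity at a fixed reference value $d_0^3/(12\eta)$, and linearizing the porosity/aperture closure), for which existence is straightforward (linear symmetric saddle-point problem for mechanics, linear elliptic system for flow). The energy estimates of Theorem~\ref{th:energy_estimates} extend uniformly in $s$ to the homotopy, since each term in those estimates is controlled by the source data and the previous time step values. This provides an $s$-independent ball outside of which $F_s$ does not vanish, so that the topological degree of $F_s$ on this ball is constant in $s$. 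Since this degree is nonzero at $s=0$, it is nonzero at $s=1$, giving the required solution.

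The main delicate point is verifying that the a priori estimates do carry over uniformly in the homotopy parameter while preserving the structural coupling (Biot's equations tested against $\delta_t\bu$, Darcy's equation tested against $p$) that produces the telescoping energy identity; this is precisely where the argument mirrors \cite[Theorem 4.4]{BDMP:21}, and the simplifications provided by $F=0$ (no friction) and single-phase flow ($S^{\rm w}\equiv 1$) make the energy manipulation cleaner. Once this is in place, continuity of the Signorini projection and of the other nonlinearities (in particular the cubic term $d_{f,\D_\bu}^3$, which is continuous since $d_{f,\D_\bu}\ge d_0$ by Lemma~\ref{lem:local.coulomb} and \eqref{GD_closures}) ensures that the degree argument applies and yields existence.
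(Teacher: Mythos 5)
Your plan is correct and follows essentially the same strategy as the paper, whose proof of this theorem is the one-line reduction to \cite[Theorems 4.2 and 4.4]{BDMP:21} by setting the friction coefficient to zero and the wetting saturation to one: time-step induction, local reformulation of the Signorini complementarity as a projection (fixed-point) equation, and a topological-degree homotopy on the resulting continuous finite-dimensional system bounded by the a priori estimates. Your acknowledgment of the ``main delicate point'' is apt---the homotopy must scale the Biot coupling term in \eqref{GD_meca} and the $b\,\div(\bu-\bu^0)$ contribution in the porosity closure of \eqref{GD_closures} by the \emph{same} factor so that the energy cancellation producing the telescoping identity survives uniformly in $s$, and the inf--sup condition \eqref{infsup} should be invoked explicitly to bound the Lagrange multiplier $\lambda$, since without that bound the degree argument has no $s$-independent ball in the multiplier direction.
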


\subsection{Convergences of $d_{f,\D_\bu}$ and $\phi_{\D}$}

\begin{proposition}[Estimates on the time translates of $d_{f,\D_\bu}$ and $\phi_\D$]\label{prop_timetranslates}
Let $\tau,\tau'  \in (0,T)$ and, for $s\in (0,T]$, denote by $k_s$ the natural number such that $s\in (t_{k_s},t_{k_s+1}]$. For any $\psi_f \in X_{\D_p^f}$ and any $\psi_m\in X^0_{\D_p^m}$, it holds
\begin{multline}
    \label{est_timetranslates_df}
\Big|  \< d_{f,\D_{\bu}}(\tau) - d_{f,\D_{\bu}}(\tau'), \Pi^f_{\D_p} \psi_f \>_{L^2(\Gamma)} \Big| \\
\lsim  \sum_{k = k_{\tau}+1}^{k_{\tau'}}
 \dtk  \left( 
 \xi^{(1),k+1}_f \| \nabla^f_{\D_p} \psi_f \|_{L^8(\Gamma)} +
 \xi^{(2),k+1}_f \| \Pi^f_{\D_p} \psi_f \|_{L^2(\Gamma)} 
+ \,\dsp \sum_{\aa=\pm}  \xi^{(1),k+1}_\aa \| \jump{(0,\psi_f)}^\aa_{\D_p}\|_{L^2(\Gamma)} 
 \right),
\end{multline}
and
\begin{multline}
    \label{est_timetranslates_phi}
\Big|  \< \phi_{\D}(\tau) - \phi_\D(\tau'), \Pi^m_{\D_p} \psi_m \>_{L^2(\Omega)} \Big| \\
\lsim  \sum_{k = k_{\tau}+1}^{k_{\tau'}}
 \dtk  \left( 
 \xi^{(1),k+1}_m \| \nabla^f_{\D_p} \psi_m \|_{L^2(\Omega)} +
 \xi^{(2),k+1}_m \| \Pi^m_{\D_p} \psi_m \|_{L^2(\Omega)} 
+ \,\dsp \sum_{\aa=\pm}  \xi^{(1),k+1}_\aa \| \jump{(\psi_m,0)}^\aa_{\D_p}\|_{L^2(\Gamma)} 
 \right),
\end{multline}
with $(\xi^{(i),k+1}_{\omega})_{i=1,2;\omega=m,f,\pm;k\in\{0,\ldots,N\}-1}$ such that
\begin{equation}\label{eq:bound.RHS.timetranslate}
\sum^{N-1}_{k = 0} \dtk \[  \sum_{\omega=m,f}\left( \xi^{(2),k+1}_\omega \right)^2 + \sum_{\rt=m,f,\pm} \left( \xi^{(1),k+1}_{\rm rt} \right)^2 \]  \lsim 1.
\end{equation}
\end{proposition}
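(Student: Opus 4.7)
The approach is to extract both estimates directly from the discrete flow scheme \eqref{GD_hydro} by testing it with a carefully chosen, time-piecewise-constant function. Concretely, for \eqref{est_timetranslates_df} I will take $\varphi=(0,\varphi_f)\in (X^0_{\D_p})^{N+1}$ with $\varphi_f^j=\psi_f$ when $j\in\{k_\tau+2,\ldots,k_{\tau'}+1\}$ and $\varphi_f^j=0$ otherwise; for \eqref{est_timetranslates_phi}, the symmetric choice $\varphi=(\varphi_m,0)$ with $\varphi_m^j=\psi_m$ on the same discrete range. With these choices, the space-time extension gives $\Pi^f_{\D_p}\varphi_f(t)=\Pi^f_{\D_p}\psi_f$ precisely on the slices $(t_k,t_{k+1}]$ with $k\in\{k_\tau+1,\ldots,k_{\tau'}\}$, and zero elsewhere, so that the discrete storage term telescopes exactly to
$$
\int_0^T\int_\G \delta_t d_{f,\D_\bu}\,\Pi^f_{\D_p}\varphi_f\,\d t\,\d\sigma
=\int_\G \bigl(d_{f,\D_\bu}(\tau')-d_{f,\D_\bu}(\tau)\bigr)\Pi^f_{\D_p}\psi_f\,\d\sigma,
$$
and similarly for $\phi_\D$. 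Note that the closure relation \eqref{GD_closures} is not needed at this stage, since $\delta_t d_{f,\D_\bu}$ and $\delta_t\phi_\D$ already appear explicitly in the flow balances. Rearranging \eqref{GD_hydro}, the right-hand side becomes a time sum over $k\in\{k_\tau+1,\ldots,k_{\tau'}\}$ of the fracture (resp. matrix) Darcy term, the matrix--fracture jump couplings, and the source term, each evaluated on $(t_k,t_{k+1}]$.

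The bulk of the work is then routine Cauchy--Schwarz/H\"older bookkeeping on each term. The matrix-Darcy term yields $\xi^{(1),k+1}_m\sim\|\nabla^m_{\D_p} p_m^{k+1}\|_{L^2(\Omega)}$; the matrix--fracture jump couplings yield $\xi^{(1),k+1}_\aa\sim \|\jump{p^{k+1}}^\aa_{\D_p}\|_{L^2(\G)}$; and the sources, handled by Cauchy--Schwarz in time, yield $\xi^{(2),k+1}_m\sim \dtk^{-1/2}\|h_m\|_{L^2((t_k,t_{k+1})\times\Omega)}$ and $\xi^{(2),k+1}_f\sim \dtk^{-1/2}\|h_f\|_{L^2((t_k,t_{k+1})\times\G)}$. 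For each of these, the summability property \eqref{eq:bound.RHS.timetranslate} is an immediate consequence of the corresponding bound in \eqref{apriori.est}.

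The main technical step, and the reason for the unusual $L^8$-norm on $\nabla^f_{\D_p}\psi_f$ in \eqref{est_timetranslates_df}, is the fracture Darcy contribution $\int_\G \frac{(d_{f,\D_\bu}^{k+1})^3}{12\eta}\nabla^f_{\D_p} p_f^{k+1}\cdot \nabla^f_{\D_p}\psi_f\,\d\sigma$. A naive splitting of the aperture weight as $d^3=d^{3/2}\cdot d^{3/2}$ would leave a factor $d_{f,\D_\bu}^{\,3/2}\nabla^f_{\D_p}\psi_f$ that is not controlled for an arbitrary test function $\psi_f$. I will instead apply H\"older's inequality with the three exponents $(2,8/3,8)$, exploiting that $(d_{f,\D_\bu})^{3/2}\in L^\infty(0,T;L^{8/3}(\G))$ by virtue of the $L^\infty(0,T;L^4(\G))$-bound in \eqref{apriori.est}, to obtain
$$
\int_\G d_{f,\D_\bu}^{\,3}\,|\nabla^f_{\D_p} p_f|\,|\nabla^f_{\D_p}\psi_f|\,\d\sigma \le \|d_{f,\D_\bu}^{\,3/2}\nabla^f_{\D_p} p_f\|_{L^2(\G)}\,\|d_{f,\D_\bu}\|_{L^4(\G)}^{3/2}\,\|\nabla^f_{\D_p}\psi_f\|_{L^8(\G)}.
$$
This produces $\xi^{(1),k+1}_f\sim \|(d_{f,\D_\bu}^{k+1})^{3/2}\nabla^f_{\D_p} p_f^{k+1}\|_{L^2(\G)}\,\|d_{f,\D_\bu}^{k+1}\|_{L^4(\G)}^{3/2}$, and the corresponding summability \eqref{eq:bound.RHS.timetranslate} then follows by combining the weighted $L^2$-bound on the fracture flux with the $L^\infty(0,T;L^4(\G))$-bound on $d_{f,\D_\bu}$, both furnished by Theorem \ref{th:energy_estimates}.
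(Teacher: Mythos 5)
Your proposal is correct and follows essentially the same route as the paper: test the discrete flow balance \eqref{GD_hydro} with a time-piecewise-constant function supported on $(\tau,\tau']$, use the telescoping of the storage term to produce the time translate, and then bound the remaining scheme terms by Cauchy--Schwarz plus the generalised H\"older inequality with exponents $(2,8/3,8)$ applied to $d_{f,\D_\bu}^{\,3}\nabla^f_{\D_p}p_f\cdot\nabla^f_{\D_p}\psi_f$, identifying exactly the same $\xi$'s and invoking \eqref{apriori.est} together with the $L^\infty(0,T;L^4(\Gamma))$ bound on $d_{f,\D_\bu}$ for the summability \eqref{eq:bound.RHS.timetranslate}. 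The only (inessential) difference is a one-index offset in the choice of test function relative to the paper's stated $\{k_\tau+1,\ldots,k_{\tau'}\}$: your choice $j\in\{k_\tau+2,\ldots,k_{\tau'}+1\}$ is the one consistent with the paper's space-time extension convention ($\varphi_f(t)=\varphi_f^{k+1}$ on $(t_k,t_{k+1}]$), and yields exactly the slices $k\in\{k_\tau+1,\ldots,k_{\tau'}\}$ appearing in the statement.
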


\begin{proof}
The proof is similar to that of \cite[Proposition 4.5]{GDM-poromeca-disc}, but we provide a few details for the sake of legibility. Take $\psi_f\in X^0_{\D_p^f}$, write $\langle d_{f,\D_{\bu}}(\tau),\Pi_{\D_p}^f\psi_f\rangle_{L^2(\Gamma)}-\langle d_{f,\D_{\bu}}(\tau'),\Pi_{\D_p}^f\psi_f\rangle_{L^2(\Gamma)}$ as the sum of the jumps at each time step $(t_{k})_{k\in\{k_{\tau}+1,\ldots,k_{\tau'}\}}$ between $\tau$ and $\tau'$ (we assume $\tau<\tau'$), and use the scheme \eqref{GD_hydro} with $\varphi=(0,\varphi_f)\in (X^0_{\D_p})^{N+1}$ where $\varphi^k_f=0$ if $k\not\in \{k_{\tau}+1,\ldots,k_{\tau'}\}$, $\varphi_f^k=\psi_f$ otherwise. This leads to
\begin{align*}
\langle d_{f,\D_{\bu}}(\tau){}&,\Pi_{\D_p}^f\psi_f\rangle_{L^2(\Gamma)}-\langle d_{f,\D_{\bu}}(\tau'),\Pi_{\D_p}^f\psi_f\rangle_{L^2(\Gamma)}=\sum_{k=k_\tau+1}^{k_{\tau'}}\dtk \int_\Gamma (\delta_t d_{f,\D_{\bu}})(t_{k+1})\Pi_{\D_p}^f\psi_f\d\sigma\\
={}&\sum_{k=k_\tau+1}^{k_{\tau'}}\Big(\int_{t_k}^{t_{k+1}} \int_\Gamma h_f \Pi_{\D_p}^f \psi_f ~\d t\d\sigma
-\int_{t_k}^{t_{k+1}}  \int_\Gamma  \dsp {d_{f,\D_\bu}^{\,3} \over 12 \eta} \,\, \nabla_{\D_p}^f p_f \cdot   \nabla_{\D_p}^f \psi_f  ~\d t\d\sigma \\
&   \qquad- \sum_{\aa \in \{+,-\}}
  \int_{t_k}^{t_{k+1}}  \int_\G   \Lambda_f \jump{p}^\aa_{\D_p}   \jump{(0,\psi_f)}^\aa_{\D_p}  ~\d t\d\sigma \Big).
\end{align*}
The estimate \eqref{est_timetranslates_df} follows writing $d_{f,\D_\bu}^{\,3}\nabla_{\D_p}^f p_f=d_{f,\D_\bu}^{\,\nf32}\nabla_{\D_p}^f p_f\times d_{f,\D_\bu}^{\,\nf32}$, applying Cauchy--Schwarz and generalised H\"older inequalities (the latter with exponents $(2,8/3,8)$) and setting  
\begin{align*}
  & \xi^{(1),k+1}_f = \| (d^{k+1}_{f,\D_{\bu}})^{\nf 3 2} \nabla^f_{\D_p} p^{k+1}_f \|_{L^2(\Gamma)}  \| d^{k+1}_{f,\D_{\bu}} \|^{\nf 3 2}_{L^4(\Gamma)},\quad   \xi^{(2),k+1}_f  = \Big\| {1\over \dtk}\int_{t_k}^{t_{k+1}} h_f(t,\cdot)~\d t \Big\|_{L^2(\Gamma)},\\
  & \xi^{(1),k+1}_\aa = \| \jump{p^{k+1}}^\aa_{\D_p}\|_{L^2(\Gamma)}.  
\end{align*}
The proof of \eqref{est_timetranslates_phi} is obtained in a similar way: writing $\langle \phi_\D(\tau),\Pi_{\D_p}^m\psi_m\rangle_{L^2(\Omega)}-\langle \phi_\D(\tau'),\Pi_{\D_p}^m\psi_m\rangle_{L^2(\Omega)}$ as the sum of its jumps, choosing $\varphi=(\varphi_m,0)$ with $\varphi_m=(0,\ldots,0,\psi_m,\ldots,\psi_m,0,\ldots,0)$ in the scheme \eqref{GD_hydro} and using Cauchy--Schwarz inequality, we obtain  \eqref{est_timetranslates_phi} with
$$
\xi_m^{(1),k+1}=\|\nabla_{\D_p}^mp_m^{k+1}\|_{L^2(\Omega)}\quad\mbox{ and }\quad \xi_m^{(2),k+1}=\Big\| {1\over \dtk}\int_{t_k}^{t_{k+1}} h_m(t,\cdot)~\d t \Big\|_{L^2(\Omega)}.
$$ 
The Korn and Sobolev trace inequalities  {(recalling that $\Gamma$ is a $(d-1)$-dimensional object)} show that, for all $t\in [0,T]$, $\|\bu(t)\|_{L^4(\Gamma)^d}\lesssim \|\bu(t)\|_{\U_0}$. Since $d_{f,\D_\bu}=d_0-\jump{\bu}_{n,\mathcal F}$, we infer from the bound on $\bu$ in \eqref{apriori.est} that 
\begin{equation}\label{eq:bound.df}
\max_{t\in[0,T]}\|d_{f,\D_\bu}(t)\|_{L^4(\Gamma)}\lesssim 1.
\end{equation}
Using then the other estimates in \eqref{apriori.est}, we deduce \eqref{eq:bound.RHS.timetranslate}.
\end{proof}

\begin{proposition}[Compactness of {$d_{f,\D_\bu}$ and $\phi_\D$}]
  \label{prop_compactnessdfsf}
  Up to a subsequence, as $l\to+\infty$:
  \begin{itemize}
  \item $(d_{f,\D^l_\bu})_{l\in\N}$ converges strongly in $L^\infty(0,T;L^p(\Gamma))$ for all $2 \leq p < 4$,
  \item $(\phi_{\D^l})_{l\in\N}$ converges uniformly-in-time weakly in $L^2(\Omega)$ (as per \cite[Definition C.14]{gdm}).
  \end{itemize}
\end{proposition}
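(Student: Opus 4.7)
The plan is to derive both compactness statements from the \emph{a priori} bounds in Theorem~\ref{th:energy_estimates} and the time-translate estimates of Proposition~\ref{prop_timetranslates}, through discrete Ascoli-type arguments in the spirit of \cite[Appendix~C]{gdm}. First, I would collect the pointwise-in-time bounds: the closure relations \eqref{GD_closures}, together with the $\U_0$-bound on $\bu$ (which controls $\div\,\bu$ in $L^\infty(0,T;L^2(\Omega))$) and the bound on $M^{-\nf12}\Pi^m_{\D_p}p_m$ from \eqref{apriori.est}, show that $\phi_\D$ is bounded in $L^\infty(0,T;L^2(\Omega))$; meanwhile \eqref{eq:bound.df} yields $d_{f,\D_\bu}$ bounded in $L^\infty(0,T;L^4(\Gamma))$. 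Applying the Cauchy--Schwarz inequality in $k$ to \eqref{est_timetranslates_df}--\eqref{est_timetranslates_phi} and invoking \eqref{eq:bound.RHS.timetranslate} then gives time equicontinuity of order $\sqrt{|\tau-\tau'|+\Delta t}$ in a dual sense, i.e.\ once tested against the reconstructions $\Pi^m_{\D_p}\psi_m$ or $\Pi^f_{\D_p}\psi_f$ of discrete test functions.

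For $\phi_\D$, the matrix compactness assumption \eqref{eq:compact.matrix}, together with limit-conformity, shows that $\Pi^m_{\D_p^l}$ maps uniformly bounded sequences into relatively compact sets in $L^2(\Omega)$, and that the range of these reconstructions is asymptotically dense. Combined with the dual-sense time equicontinuity and the uniform $L^\infty(0,T;L^2(\Omega))$-bound, a weak-topology discontinuous Ascoli--Arzel\`a theorem (e.g.\ \cite[Theorem~C.11]{gdm}) delivers, up to a subsequence, the claimed uniform-in-time weak convergence of $\phi_{\D^l}$.

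For $d_{f,\D_\bu}$, I would first establish strong compactness at each fixed time. Writing $d_{f,\D_\bu}(t) = d_0 - \jump{\bu(t)}_{n,\faces}$ and using that $\bu(t)$ is bounded in $\U_0 \subset H^1(\Omega\setminus\overline\Gamma)^d$, the Sobolev trace embedding together with Rellich--Kondrachov provides compactness of $\gamma_\pm\bu(t)$ (and hence of $\jump{\bu(t)}_n$) in $L^q(\Gamma)^d$ for every $q<4$; the face-wise $L^2$-projection defining $\jump{\cdot}_{n,\faces}$ converges to the identity as the mesh refines, again by limit-conformity. Combining this time-wise strong compactness with the dual equicontinuity of $d_{f,\D_\bu}$ via a discontinuous Ascoli-type argument produces, up to a subsequence, strong convergence in $L^\infty(0,T;L^2(\Gamma))$; this is then upgraded to $L^\infty(0,T;L^p(\Gamma))$ for $2\le p<4$ by interpolating with the uniform $L^4$-bound.

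The hard part will be reconciling, in the Ascoli argument for $d_{f,\D_\bu}$, the weak dual-norm time equicontinuity with the strong $L^q$-space compactness at each fixed time. The mechanism that makes this work is the denseness (through limit-conformity) of the reconstructions $\Pi^f_{\D_p}\psi_f$ in the natural test space, together with the simultaneous control of $\nabla_{\D_p}^f\psi_f$ in $L^8$ appearing on the right-hand side of \eqref{est_timetranslates_df}: one has to choose test functions lying in a sufficiently regular subspace, whose density governs the quality of the resulting convergence. The interpolation with the $L^4$-bound is what explicitly limits the range of admissible exponents to $p<4$.
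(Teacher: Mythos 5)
Your proposal follows essentially the same route as the paper's proof: time-translate estimates from Proposition~\ref{prop_timetranslates} feed a discontinuous Ascoli-type argument, space compactness of $d_{f,\D_\bu}$ is obtained from the compact trace $\U_0\to L^2(\Gamma)$, and the $L^4$-bound \eqref{eq:bound.df} is used to upgrade to $L^p$, $p<4$. Two minor remarks. First, invoking \eqref{eq:compact.matrix} for the $\phi_\D$ part is superfluous: since only uniform-in-time \emph{weak}-$L^2$ convergence is claimed, the pointwise-in-time relative compactness needed in the weak Ascoli theorem is supplied for free by the $L^\infty(0,T;L^2(\Omega))$ bound, and all that is needed beyond that is the density (via consistency) of the reconstructions as test functions; the strong-compactness assumption \eqref{eq:compact.matrix} is reserved in the paper for identifying the limit of the product $\phi_\D\,\Pi^m_{\D_p}p_m$ in Section~\ref{sec:proof.convergence}. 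Second, your sentence ``combining this time-wise strong compactness with the dual equicontinuity\ldots via a discontinuous Ascoli-type argument produces strong convergence in $L^\infty(0,T;L^2(\Gamma))$'' compresses the mechanism: the standard Ascoli argument, fed only a dual-sense equicontinuity, yields uniform-in-time \emph{weak} $L^2(\Gamma)$ convergence; the paper then upgrades this to strong $L^\infty(0,T;L^2(\Gamma))$ convergence by combining it with a uniform-in-$(t,l)$ $L^2(\Gamma)$-space-translate estimate (which your trace-compactness observation supplies) via \cite[Lemma~A.2]{GDM-poromeca-cont}. Your final paragraph correctly senses that this reconciliation is the delicate step, but it is that separate weak-plus-space-translates-implies-strong lemma, not a strong variant of Ascoli, that does the work.
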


\begin{proof}
From Proposition \ref{prop_timetranslates} and following the same arguments as in the proof of \cite[Proposition 4.8]{GDM-poromeca-disc} (with $\Pi_{\D_p}^fs_f^\alpha=1$ in this reference), we get the uniform-in-time weak-$L^2$ compactness of $d_{f,\D_\bu}$ and $\phi_{\D^l}$. Using the compactness of the trace $\U_0\to L^2(\Gamma)$ and the bound on $\bu$ in \eqref{apriori.est}, we obtain as in the proof of \cite[Proposition 4.10]{GDM-poromeca-cont} a uniform-in-time estimate on the $L^2(\Gamma)$-space-translates of $d_{f,\D_\bu}=d_0-\jump{\bu}_{n,\mathcal F}$ (recall that $d_0$ is continuous). In conjunction with the uniform-in-time weak-$L^2(\Gamma)$ compactness of $d_{f,\D_\bu}$ and \cite[Lemma A.2]{GDM-poromeca-cont}, this gives the compactness of $d_{f,\D_\bu}$ in $L^\infty(0,T;L^2(\Gamma))$. The conclusion then follows from the bound \eqref{eq:bound.df}.
\end{proof}

\subsection{Preliminary lemmas}

We establish here two density results and a compactness property that will be used in the proof of convergence.

\begin{lemma}[Density of smooth functions in $\U_0$]
Let $\bCinf{\Omega}{\ol{\G}}$ be the space of smooth functions $\O\backslash\overline\G\to\R^d$ that vanish on $\partial\Omega$ and whose derivatives of any order admit finite limits on each side of $\Gamma$. Then, $\bCinf{\Omega}{\ol{\G}}$ is dense in $\U_0$.
\label{lem:density.U0}
\end{lemma}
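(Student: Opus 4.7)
The plan is to approximate $\ol{\bv}\in\U_0$ in two stages: first truncate $\ol{\bv}$ away from the singular set of the fracture network, and then localize and mollify.

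Let $\Sigma := \bigl(\bigcup_{i\in I}\partial\Gamma_i\setminus\partial\Omega\bigr)\cup\bigcup_{i\neq j}\bigl(\overline{\Gamma_i}\cap\overline{\Gamma_j}\bigr)$ denote the union of interior fracture tips and fracture intersections. Since each $\Gamma_i$ is a planar polygon, $\Sigma$ is a finite union of polygonal sets of Hausdorff dimension at most $d-2$, and therefore has zero $H^1$-capacity in $\mathbb R^d$. I would use classical logarithmic-type capacity cutoffs (around points when $d=2$, around segments when $d=3$) to construct a sequence $(\chi_\epsilon)_\epsilon\subset C^\infty(\mathbb R^d)$ satisfying $0\le\chi_\epsilon\le 1$, $\chi_\epsilon\equiv 0$ on a neighborhood of $\Sigma$, $\chi_\epsilon\to 1$ pointwise, and $\|\nabla\chi_\epsilon\|_{L^2(\mathbb R^d)}\to 0$. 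Combined with the componentwise truncation $\ol{\bv}_M$ of $\ol{\bv}$ at height $M$ (which preserves the zero trace on $\partial\Omega$, is $H^1$-stable, and converges to $\ol{\bv}$ in $H^1$ as $M\to\infty$), a diagonal argument gives $\chi_{\epsilon(M)}\ol{\bv}_M\to\ol{\bv}$ in $H^1(\Omega\setminus\overline{\Gamma})^d$: for fixed $M$ the cross term $\ol{\bv}_M\otimes\nabla\chi_\epsilon$ has $L^2$-norm $\lesssim M\|\nabla\chi_\epsilon\|_{L^2}\to 0$, while $\chi_\epsilon\nabla\ol{\bv}_M\to\nabla\ol{\bv}_M$ in $L^2$ by dominated convergence. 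Since each approximant lies in $\U_0$ and is supported away from $\Sigma$, the problem reduces to approximating bounded $\ol{\bw}\in\U_0$ whose support avoids $\Sigma$.

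For such a $\ol{\bw}$, I would cover the support by finitely many open balls of three types: $(i)$ balls contained in $\Omega\setminus\overline{\Gamma}$; $(ii)$ balls meeting $\partial\Omega$ but disjoint from $\overline{\Gamma}$; $(iii)$ balls meeting a relatively open portion of a single fracture face $\Gamma_i\setminus\Sigma$ and disjoint from $\partial\Omega\cup(\overline{\Gamma}\setminus\Gamma_i)$. Using a subordinate $C^\infty$ partition of unity $(\theta_j)$, I would approximate each piece $\theta_j\ol{\bw}$ by, respectively: $(i)$ standard mollification on $\mathbb R^d$; $(ii)$ inward translation perpendicular to $\partial\Omega$ (Lipschitz here) followed by mollification with small parameter, which preserves the zero trace; $(iii)$ a local $C^1$ change of coordinates flattening $\Gamma_i$, which splits the ball into two Lipschitz half-balls $B_\pm$ on each of which classical density of $C^\infty(\overline{B_\pm})$ in $H^1(B_\pm)$ provides smooth approximations with smooth one-sided limits on the flattened fracture face. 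Summing these approximations via the partition of unity yields an element of $\boldsymbol{C}^\infty_c(\Omega\setminus\Gamma)$ converging to $\ol{\bw}$.

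The hard part will be the first stage: constructing the capacity cutoff $\chi_\epsilon$ with gradient going to $0$ in $L^2$ and making the product-rule convergence rigorous. The truncation by $M$ is the device that bypasses any subtle Sobolev embedding estimate on $\nabla\chi_\epsilon\otimes\ol{\bv}$ for unbounded $\ol{\bv}$. Once the reduction to bounded, $\Sigma$-avoiding functions is achieved, the second stage reduces to a routine local-chart mollification argument on Lipschitz pieces.
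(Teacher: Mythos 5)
Your overall strategy is the same as the paper's: cut off near the singular set of $\Gamma$ using a zero-capacity argument (the paper's $(1-z_n)$ is your $\chi_\epsilon$), truncate in $L^\infty$ to make the cross term $\ol{\bv}_M\otimes\nabla\chi_\epsilon$ controllable, then mollify the resulting bounded, compactly-supported-away-from-the-bad-set function. The paper is terser on the last step (``convolve with kernels upwinded on each side of $\Gamma$ and on $\partial\Omega$''), while you spell out a covering--partition-of-unity--local-flattening argument; these are equivalent.

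There is, however, a concrete gap in your covering step: your $\Sigma$ deliberately excludes the boundary tips $\partial\Gamma\cap\partial\Omega$ (you take $\partial\Gamma_i\setminus\partial\Omega$), so the support of $\ol{\bw}$ can reach arbitrarily close to a point $\x_0\in\partial\Gamma_i\cap\partial\Omega$. No ball containing such a point is of your type $(i)$ (it meets $\overline\Gamma$), type $(ii)$ (it meets $\overline\Gamma$), or type $(iii)$ (it meets $\partial\Omega$), so the claimed finite covering of $\operatorname{supp}\ol{\bw}$ fails and the partition-of-unity mollification is not defined there. The model explicitly allows non-immersed fractures (the paper distinguishes $\partial\Gamma\cap\partial\Omega$ from the interior tips throughout), so this case cannot be dismissed. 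The fix is easy and is exactly what the paper does: $\partial\Gamma\cap\partial\Omega$ is also a finite set of points ($d=2$) or of segments ($d=3$), hence of zero $2$-capacity, so you should include it in $\Sigma$; your $\chi_\epsilon$ construction then kills $\ol{\bv}$ near those points as well and your three ball types suffice. Alternatively, keep your $\Sigma$ and add a fourth ball type meeting both $\partial\Omega$ and a single $\Gamma_i$ away from its vertices, and flatten both hypersurfaces simultaneously (transversality of $\Gamma_i$ and $\partial\Omega$ there makes each side a Lipschitz wedge), but the capacity route is cleaner.
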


\begin{proof}
The tips of the fracture network $\G$ (including those on $\partial\Omega$) are finite sets of points (in 2D) or segments of lines (in 3D). In either case, their 2-capacity is zero. There exists therefore a sequence of functions $(z_\ell)_{\ell\in\N}$ in $H^1_0(\Omega)$ such that $0\le z_\ell\le 1$, $z_\ell=1$ on a neighborhood of the tips, and $z_\ell\to 0$ in $H^1_0(\Omega)$ as $\ell\to+\infty$ (see, for example, the proof of \cite[Lemma 2.3]{CDPRX17} for an explicit construction).

Consider now $\bv\in\U_0$. We have to approximate $\bv$ by functions in $\bCinf{\Omega}{\ol{\G}}$. As $R\to+\infty$, the function defined by truncating each component of $\bv$ at levels $\pm R$ converges to $\bv$ in $\U_0$; without loss of generality and relying on a diagonal argument, we can thus assume that $\bv$ is bounded.
The function $(1-z_\ell)\bv$ belongs to $H^1(\Omega\backslash\G)$, has a vanishing trace on $\partial\Omega$, and has a support that does not intersect the fracture tips. This last property enables us to take the convolution of $(1-z_\ell)\bv$ with kernels that are upwinded on each side of the fracture and on $\partial\Omega$ (without encountering any issue with the upwinding when the two sides of the fracture meet at a tip, since $(1-z_\ell)\bv$ vanishes on a neighborhood of the tip), to create a function $\bw_\ell\in \bCinf{\Omega}{\ol{\G}}$ such that $\|\bw_\ell-(1-z_\ell)\bv\|_{\U_0}\le 1/\ell$. Using $0\le z_\ell\le 1$, $z_\ell\to 0$ in $H^1_0(\Omega)$, and the fact that $\bv$ is bounded, we easily see that $(1-z_\ell)\bv\to \bv$ in $\U_0$. Hence, $\bw_\ell\to\bv$ in $\U_0$ and the proof is complete.
\end{proof}

{
\begin{lemma}[Density of smooth functions in $C_f$]
The set $C^0(\G;[0,+\infty))$ is dense in $C_f$ for the $\Hminushalfn$-topology.
\label{lem:density.Cf}
\end{lemma}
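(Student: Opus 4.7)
The plan is to argue by Hahn--Banach separation in the Hilbert space $H^{-\nicefrac12}(\Gamma)$. Let $K$ denote the $H^{-\nicefrac12}(\Gamma)$-closure of $C^0(\Gamma;[0,\infty))$. First I would check the straightforward inclusion $K\subset C_f$: any $\varphi\in C^0(\Gamma;[0,\infty))$ lies in $L^2(\Gamma)\subset H^{-\nicefrac12}(\Gamma)$, and for every $\ol{v}\in H^{\nicefrac12}(\Gamma)$ with $\ol v\le 0$ the pairing $\HGbracket{\varphi}{\ol v}=\int_\Gamma \varphi\ol v\,\d\sigma\le 0$, so $\varphi\in C_f$; since $C_f$ is closed and convex, $K\subset C_f$. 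The work is the reverse inclusion $C_f\subset K$.

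To this end I would argue by contradiction, supposing that some $\ol\mu_0\in C_f\setminus K$ exists. Because $K$ is a non-empty closed convex subset of the Hilbert space $H^{-\nicefrac12}(\Gamma)$, the projection/Hahn--Banach theorem produces a continuous linear functional on $H^{-\nicefrac12}(\Gamma)$, which by the standard identification of the dual of $H^{-\nicefrac12}(\Gamma)$ with $H^{\nicefrac12}(\Gamma)$ corresponds to some $\ol w\in H^{\nicefrac12}(\Gamma)$, together with $\alpha\in\R$ such that
\begin{equation*}
\HGbracket{\ol\mu_0}{\ol w}>\alpha\qquad\text{and}\qquad \HGbracket{\eta}{\ol w}\le \alpha\ \ \text{for every }\eta\in K.
\end{equation*}

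The key step is to extract from this separation the sign of $\ol w$. Taking $\eta=0\in K$ yields $\alpha\ge 0$. For arbitrary $\varphi\in C^0(\Gamma;[0,\infty))$ and $t>0$, $t\varphi\in K$, so $t\int_\Gamma \ol w\varphi\,\d\sigma\le \alpha$; letting $t\to+\infty$ forces $\int_\Gamma \ol w\varphi\,\d\sigma\le 0$ for every non-negative $\varphi\in C^0(\Gamma)$, whence $\ol w\le 0$ almost everywhere on $\Gamma$. The defining property of $C_f$ applied to $\ol\mu_0\in C_f$ and this $\ol w$ then gives $\HGbracket{\ol\mu_0}{\ol w}\le 0$, which contradicts $\HGbracket{\ol\mu_0}{\ol w}>\alpha\ge 0$. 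Hence $C_f\subset K$, and the density is proved.

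I do not expect a real technical obstruction: the only points to verify are that $K$ is a convex cone (inherited from $C^0(\Gamma;[0,\infty))$ and the continuity of scalar multiplication on $H^{-\nicefrac12}(\Gamma)$), and the standard Riesz identification used to represent the separating functional by an element of $H^{\nicefrac12}(\Gamma)$. Notably, this approach avoids any explicit mollification or handling of the fracture-network boundary and intersections, by pushing all such difficulties into the duality pairing already built into the definition of $C_f$.
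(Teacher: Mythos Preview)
Your proof is correct and takes a genuinely different route from the paper. The paper argues constructively: it pulls $\ol\mu$ back to a nonnegative distribution $\ol\mu\circ\trace_\G\in H^{-1}(\R^d)$, mollifies to obtain nonnegative smooth functions, pushes these forward through a sign-preserving lifting $\mathcal L:H^{\nicefrac12}(\G)\to H^1(\R^d)$ to produce nonnegative elements $\zeta_n\in L^2(\G)\cap C_f$ converging to $\ol\mu$, and finally approximates each $\zeta_n$ in $L^2(\G)$ by continuous nonnegative functions. Your argument, by contrast, is the bipolar/Hahn--Banach route: the closure $K$ of $C^0(\G;[0,\infty))$ is a closed convex cone, and separating a hypothetical $\ol\mu_0\in C_f\setminus K$ immediately yields a nonpositive $\ol w\in H^{\nicefrac12}(\G)$ with $\HGbracket{\ol\mu_0}{\ol w}>0$, contradicting $\ol\mu_0\in C_f$.

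What each approach buys: the paper's construction is explicit and shows slightly more (the approximants can be taken in $L^2(\G)$ first, then in $C^0$), at the cost of having to handle the geometry of the fracture network through the trace and lifting operators. Your argument is shorter, requires no geometric input on $\G$ beyond what is already encoded in the duality pairing, and is indifferent to tips and intersections. One small terminological point: the identification of the dual of $H^{-\nicefrac12}(\G)$ with $H^{\nicefrac12}(\G)$ that you invoke is reflexivity (biduality), not the Riesz map; the Riesz map would represent the separating functional by an element of $H^{-\nicefrac12}(\G)$, which is not what you want. The substance of your argument is unaffected.
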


\begin{proof}

\underline{Step 1}: \emph{reduction to a single infinite fracture}.\\
Let $\mu\in C_f$ and take $(z_\ell)_{\ell\in\N}$ as in the proof of Lemma \ref{lem:density.U0}. For all $v\in\Hhalfn$ and all $\bv\in\U_0$ such that $v=\jump{\bv}_n$, since $z_\ell$ is continuous across $\Gamma$ we have $(1-z_\ell)v=\jump{(1-z_\ell)\bv}_n$, which shows that $(1-z_\ell)v\in\Hhalfn$ and, since $(1-z_\ell)\bv\to\bv$ in $\U_0$ as $\ell\to+\infty$ (see the proof of Lemma \ref{lem:density.U0}), that $(1-z_\ell)v\to v$ in $\Hhalfn$ as $\ell\to+\infty$.

Hence, we can define $\mu_\ell=(1-z_\ell)\mu\in\Hminushalfn$ classically by setting $\HGbracket{\mu_\ell}{v}=\HGbracket{\mu}{(1-z_\ell)v}$ for all $v\in\Hhalfn$. By the arguments above, $\mu_\ell\in C_f$ and $\mu_\ell\to\mu$ weakly-$\star$ in $\Hminushalfn$ as $\ell\to+\infty$. Since $\Hminushalfn$ is a Hilbert space, Mazur's lemma then ensures some {finite} convex combinations of $(\mu_\ell)_{\ell\in\N}$ (which remain in $C_f$ since this is a convex cone) converge to $\mu$ strongly in $\Hminushalfn$. We therefore only have to approximate each of these convex combinations to conclude.

Let $\omega$ be one of these convex combinations. We have $\omega=\alpha\mu$ for some $\alpha\in C^\infty(\R^d)$ convex combination of $(1-z_\ell)_{\ell\in\N}$. In particular, $\alpha$ vanishes in a neighborhood of the tips of $\Gamma$, and we can therefore find $(\varpi_i)_{i\in I}\in C^\infty_c(\Omega)^I$ such that the support of each $\varpi_i$ only intersects $\Gamma$ along $\Gamma_i$ (without touching its tips), and such that $\omega=\sum_i \varpi_i {\alpha}\mu$ (we just need to take $\varpi_i$ equal to $1$ on a neighborhood of $\mathrm{supp}(\alpha)\cap\Gamma_i$, and whose support does not intersect $\cup_{j\not=i}\ol{\Gamma}_j$).

Each $\varpi_i{\alpha}\mu$ is supported in a compact set of $\Gamma_i$, an open set of an hyperplane of $\R^d$, and we only need to approximate each $\varpi_i\alpha\mu$ to conclude. Trivially extending this form by $0$ outside its support, we can assume that $\Gamma$ is a full hyperplane of $\R^d$.
\medskip

\underline{Step 2}: \emph{proof for a single planar fracture}.\\
Having assumed that $\Gamma$ is a hyperplane, we easily check that $\Hhalfn=H^{\nf12}(\Gamma)$ and that its dual space is $H^{-\nf12}(\Gamma)$.
The proof in this case uses classical arguments, which we provide for the sake of completeness.

Let $(\rho_k)_{k\in\N}$ be symmetric smoothing kernels on the hyperplane $\Gamma$. The convolution $\mu*\rho_k:\Gamma\to\R$ is defined by $\mu*\rho_k(\x)=\HGbracket{\mu}{\rho_k(\cdot-\x)}$ and we have $\mu*\rho_k\in C^\infty_c(\Gamma)$; moreover, since $\mu\in C_f$ and $\rho_k\ge 0$ we also have $\mu*\rho_k\ge 0$, and thus $\mu*\rho_k\in C^0(\Gamma;[0,+\infty))$.

For all $v\in H^{\nf12}(\Gamma)$, we have $\HGbracket{\mu*\rho_k}{v}=\HGbracket{\mu}{v*\rho_k}$. Since $v*\rho_k\to v$ in $H^{\nf12}(\Gamma)$ as $k\to+\infty$, we infer that $\mu*\rho_k\to \mu$ weakly-$\star$ in $H^{-\nf12}(\Gamma)$. Using again Mazur's lemma, we infer that convex combinations of
$(\mu*\rho_k)_{k\in\N}$ (which therefore belong to $C^0(\Gamma;[0,+\infty))$) converge to $\mu$ strongly in $H^{-\nf12}(\Gamma)$, which concludes the proof.
\end{proof}
}

\begin{lemma}[Partial weak/strong compactness]\label{lem:ws.compactness}
Let $T>0$, $O$ be an open bounded subset of $\R^s$ and $(f_l)_{l\in\N}$, $(g_l)_{l\in\N}$ be two sequences of functions
in $L^2((0,T)\times O)$ such that
\begin{itemize}
\item as $l\to+\infty$, $f_l\weakto f$ and $g_l\weakto g$ in $L^2((0,T)\times O)$;
\item $(f_l)_{l\in\N}$ is strongly compact in space in the following sense: setting, for $\delta>0$,
$$
T(f_l,\delta):=\sup_{z\in\R^s,\,|z|\le \delta}\|f_l(\cdot,\cdot+z)-f_l\|_{L^2((0,T)\times O)}
$$
(where $f_l$ has been extended by $0$ outside $O$), we have
\begin{equation}\label{eq:Tdelta}
\sup_{l\in\N} T(f_l,\delta)\to 0\mbox{ as $\delta\to 0$};
\end{equation}
\item $(g_l)_{l\in\N}$ is strongly compact in time, weakly in space, in the following sense: for all $\chi\in C^\infty_c(O)$, setting
$$
G_{l,\chi}(t)=\int_O g_l(t,\x)\chi(\x)\d\x,
$$
the sequence $(G_{l,\chi})_{l\in\N}$ converges strongly in $L^2(0,T)$.
\end{itemize}
Then, for all $\psi\in C([0,T]\times\overline{O})$, as $l\to+\infty$,
$$
\int_0^T\int_O f_l(t,\x)g_l(t,\x)\psi(t,\x)~\d t\d\x\to \int_0^T\int_O f(t,\x)g(t,\x)\psi(t,\x)~\d t\d\x.
$$
\end{lemma}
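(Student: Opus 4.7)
The plan is to exploit the spatial compactness of $(f_l)$ and the partial time-compactness of $(g_l)$ in complementary ways: I will mollify $f_l$ in space (thereby transferring its spatial smoothness onto the effective test function that $g_l$ sees after a Fubini manipulation) and tensorize $\psi$ (so that $g_l$ is tested against functions that are separated in $t$ and $\x$, as required by the time-compactness hypothesis).

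First I would fix a standard mollifier $\rho_\delta$ on $\R^s$ and set $f_l^\delta:=\rho_\delta*_\x f_l$ (after extending $f_l$ by zero outside $O$). Hypothesis \eqref{eq:Tdelta} entails $\sup_l\|f_l-f_l^\delta\|_{L^2((0,T)\times\R^s)}\to 0$ as $\delta\to 0$, so the uniform $L^2$-bound on $(g_l)$ (from weak convergence) together with $\|\psi\|_\infty<\infty$ allows me to replace $f_l$ by $f_l^\delta$ up to an error that vanishes with $\delta$, uniformly in $l$. By the Stone--Weierstrass theorem on $[0,T]\times\overline O$, $\psi$ is a uniform limit of finite sums $\sum_{j=1}^K\alpha_j(t)\chi_j(\x)$ with $\alpha_j\in C([0,T])$ and $\chi_j\in C(\overline O)$, so it suffices to treat the case $\psi(t,\x)=\alpha(t)\chi(\x)$ (with $\chi$ extended continuously to $\R^s$ by Tietze).

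For such a $\psi$, Fubini's theorem rewrites
\[
\int_0^T\int_O f_l^\delta\,g_l\,\alpha\chi\,\d\x\d t \;=\; \int_0^T\alpha(t)\int_{\R^s}f_l(t,\y)\,\Phi_{l,\delta,\chi}(t,\y)\,\d\y\d t,\quad \Phi_{l,\delta,\chi}(t,\y):=\int_O\rho_\delta(\x-\y)\chi(\x)\,g_l(t,\x)\,\d\x.
\]
Note that $\Phi_{l,\delta,\chi}(t,\y)=G_{l,\eta_{\delta,\y,\chi}}(t)$ with $\eta_{\delta,\y,\chi}(\x):=\rho_\delta(\x-\y)\chi(\x)\in L^2(O)$. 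I then upgrade the time-compactness hypothesis from $C^\infty_c(O)$ to $L^2(O)$: for $\chi'\in L^2(O)$ and $\chi'_n\in C^\infty_c(O)$ with $\chi'_n\to\chi'$ in $L^2(O)$, Cauchy--Schwarz yields $\|G_{l,\chi'}-G_{l,\chi'_n}\|_{L^2(0,T)}\le\|g_l\|_{L^2((0,T)\times O)}\|\chi'-\chi'_n\|_{L^2(O)}$, and a triangle argument gives $G_{l,\chi'}\to G_{\chi'}$ strongly in $L^2(0,T)$. Applying this with $\chi'=\eta_{\delta,\y,\chi}$ gives pointwise-in-$\y$ strong $L^2(0,T)$-convergence of $\Phi_{l,\delta,\chi}(\cdot,\y)$, and the uniform bound $\|\Phi_{l,\delta,\chi}(\cdot,\y)\|_{L^2(0,T)}\le\|\chi\|_\infty\|\rho_\delta\|_{L^2}\|g_l\|_{L^2}$, combined with the observation that $\Phi_{l,\delta,\chi}$ vanishes outside the bounded set $\{\y:\mathrm{dist}(\y,\overline O)\le\delta\}$, lets dominated convergence upgrade this to strong convergence of $\alpha\,\Phi_{l,\delta,\chi}$ to $\alpha\,\Phi_{\delta,\chi}$ in $L^2((0,T)\times\R^s)$.

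Finally, the weak $L^2$-convergence of $f_l$ (extended by zero) paired with this strong $L^2$-convergence passes to the limit in the Fubini expression; the symmetric manipulation identifies the limit as $\int_0^T\int_O f^\delta g\,\alpha\chi\,\d\x\d t$, and sending $\delta\to 0$ (using $\|f^\delta-f\|_{L^2}\to 0$ and $\|g\|_{L^2}<\infty$) then summing over the Stone--Weierstrass decomposition concludes the proof. The main obstacle I anticipate is the careful book-keeping across the three nested approximations (mollification of $f_l$, uniform tensor-product approximation of $\psi$, and $L^2$-density upgrade of the time-compactness), with uniform-in-$l$ control required at each stage so that the limits in $l$, $\delta$, and the Stone--Weierstrass truncation may be interchanged safely.
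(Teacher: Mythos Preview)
Your proof is correct and follows essentially the same strategy as the technique the paper invokes (it cites \cite[Theorem~5.4]{droniou.eymard2016} rather than spelling out details): mollify $f_l$ in the spatial variable, reduce $\psi$ to a tensor product via Stone--Weierstrass, rewrite the integral by Fubini so that $g_l$ is tested against a smooth spatial function for each frozen $\y$, and use the strong $L^2(0,T)$-convergence of these averages together with dominated convergence in $\y$ to conclude. The only cosmetic remark is that the Tietze extension of $\chi$ to $\R^s$ is unnecessary, since $\eta_{\delta,\y,\chi}(\x)=\rho_\delta(\x-\y)\chi(\x)$ is integrated only over $O$.
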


\begin{proof}
The proof is carried out using the same technique as in \cite[Theorem 5.4]{droniou.eymard2016}, noticing that, in this theorem, the assumption that $(\nabla_{D_m}\zeta_m)_m$ is bounded only serves to obtain estimates on the space translates -- covered by \eqref{eq:Tdelta} here -- and that the assumption that $(\delta_m\beta_m)_m$ is bounded is only used in Step 3 to get the strong convergence of the equivalent of $G_{l,\zeta}$, which we have assumed here.
\end{proof}

\subsection{Proof of the convergence theorem}\label{sec:proof.convergence}

We can now prove Theorem \ref{th:convergence}. Owing to the estimates \eqref{apriori.est}, to the coercivity property \eqref{eq:GD.coercif}, and to Proposition \ref{prop_compactnessdfsf}, we can extract subsequences which converge as per \eqref{eq:convergences} (the convergence of $\phi_{\D^l}$ following from its expression in \eqref{GD_closures} -- note that if $M<\infty$ then $\Pi_{\D^l_p}^mp^l_m$ is bounded in $L^\infty(0,T;L^2(\Omega))$ and converges thus weakly-$\star$ in this space, and if $M=\infty$ then $\phi_{\D^l}$ does not depend on $\Pi_{\D^l_p}^mp^l_m$), but without identifications of the links between the various limits. 

The limit-conformity of $(\D_p^l)_{l\in\mathbb{N}}$ enables, as in \cite{GDM-poromeca-disc}, to adapt the lemma of regularity of the limit, classical in the GDM framework (see \cite[Lemma 4.8]{gdm} for standard parabolic models, \cite[Proposition 3.1]{BHMS2016} for a hybrid model with discontinuous pressures similar to the one considered here), and obtain the links between the limits \eqref{eq:conv.pm}, \eqref{eq:conv.grad.pm}, \eqref{eq:conv.pf} and \eqref{eq:conv.jump.p}. The identification of the limit \eqref{eq:conv.gradpf} is done as in \cite{GDM-poromeca-disc}, testing with a smooth function that is compactly supported far from the fracture tips, using the $L^2$-bound of $\nabla_{\D_p}^fp_f^l$ away from these tips, and the convergence \eqref{eq:conv.df}. Finally, the identification of the limits of $\phi_{\D^l}$ and $d_{f,\D_\bu^l}$ is straightforward from their definition in terms of $p_m^l$ and $\bu^l$, and from the convergence of these unknowns.

It remains to prove that $(\ol{p},\ol{\bu},\ol{\lambda})$ is a weak solution to \eqref{eq:model}. The proof that $(\ol{p},\ol{\bu})$ satisfies the flow equation \eqref{weak:flow} is done using the consistency of $(\D_p^l)_{l\in\N}$ as in \cite{GDM-poromeca-disc} (in a simpler way here, and with straightforward identification of the matrix--fracture flux term), and is therefore omitted. We instead focus on the mechanical parts of the equation, which contain the novelty in the form of the Lagrange multiplier and variational inequality accounting for contact.

We prove \eqref{weak:meca}--\eqref{weak:multiplier} through the equivalent formulation \eqref{weak:meca.alternate}--\eqref{weak:multiplier.alternate}.
By Lemma~\ref{lem:density.U0} and \cite[Corollaire 1.3.1]{poly_intsob}, {linear combinations of functions of the form $(t,\x)\mapsto\theta(t)\ol{\bw}(\x)$ (with $\theta\in C^\infty_c(0,T)$ and $\ol{\bw}\in \bCinf{\Omega}{\ol{\G}}$) are dense in $L^2(0,T;\U_0)$; moreover, all the terms in \eqref{weak:meca.alternate} are continuous with respect to $\ol{\bv}$ for the topology of $L^2(0,T;\U_0)$. We therefore only have to prove this relation for functions of the form $\bv(t,\x)=\theta(t)\bw(\x)$ with $\theta$ and $\bw$ as above.} The consistency of $(X^l_{\D_\bu})_l$ ensures the existence of $\bw^l\in (X^0_{\D_\bu^m})^l$ such that $\bw^l\to \ol{\bw}$ in $\U_0$. Testing \eqref{GD_meca} with $\bv=\int_{t_{k-1}}^{t_k}\theta(t)~\d t\bw^l$, summing over $k\in\{1,\ldots,N\}$ and dropping the index $l$ we have
\begin{multline*}
     \int_0^T\int_\Omega \( \bbsig(\bu) : \bbeps(\theta\bw)  
    - b ~\Pi_{\D_p}^m p_m~  \div\,(\theta\bw)\)  ~\d t\d\x 
   +  \int_0^T\int_\Gamma   \lambda ~ \jump{\theta\bw}_n~\d t\d\sigma\\
     + \int_0^T \int_\Gamma  \Pi_{\D_p}^f p_f~  \jump{\theta\bw}_{n,\faces} ~\d t\d\sigma 
  = \int_0^T\int_\Omega  \mathbf{f} \cdot  (\theta\bw) ~\d t\d\x .
\end{multline*}
The strong convergence $\bw\to\ol{\bw}$ in $\U_0$ (which implies the strong convergence $\jump{\bw}_n\to\jump{\ol{\bw}}_n$ in ${\Hhalfn}$) and the weak convergences \eqref{eq:conv.pm}, \eqref{eq:conv.pf}, \eqref{eq:conv.u} and \eqref{eq:conv.lambda} enable us to pass to the limit and see that \eqref{weak:meca.alternate} holds for $\ol{\bv}=\theta\ol{\bw}$, and thus for all $\ol{\bv}\in L^\infty(0,T;\U_0)$. 

We now consider the variational inequality \eqref{weak:multiplier.alternate}. 
Using Lemma \ref{lem:density.Cf} and an easy adaptation of \cite[Corollaire 1.3.1]{poly_intsob} (to manage the fact that $L^2(0,T;C_f)$ is valued in a subset of the vector space ${\Hminushalfn}$, instead of the entire vector space), we see that any $\ol{\mu}\in L^2(0,T;C_f)$ can be approximated in this space by functions in $C^0([0,T]\times \Gamma;[0,+\infty))$. We thus only have to consider the case where $\ol{\mu}$ belongs to the latter set. Let $\mu^l\in (C_{\D_f})^{N}$ be defined by $(\mu^l)^k_\sigma=\frac{1}{|\sigma|}\int_\sigma \ol{\mu}(t_k,\x)\d\sigma(\x)$. The continuity of $\ol{\mu}$ ensures that $\mu^l\to \ol{\mu}$ in $L^2((0,T)\times \Gamma)$ as $l\to+\infty$. Moreover, using $(\mu^l)^k$ in \eqref{GD_meca_var}, recalling that $\lambda\jump{\bu}_n=0$ (by Lemma \ref{lem:local.coulomb}), multiplying by $\dtkmun$ and summing over $k\in\{1,\ldots,N\}$ we have
$$
\int_0^T\int_\G \mu^l\jump{\bu^l}_n\le 0.
$$
We can then pass to the limit $l\to+\infty$, using the weak convergence of $\jump{\bu^l}_n$ in $L^2((0,T)\times \G)$ which comes from \eqref{eq:conv.u}, to get
\begin{equation}\label{eq:var.limit.1}
\int_0^T\HGbracket{\ol{\mu}}{\jump{\ol{\bu}}_n}\le 0.
\end{equation}
This proves {\eqref{eq:var.limit.1}} for nonnegative continuous functions $\ol{\mu}$ (for which the bracket is actually an integral), and thus for all $\ol{\mu}\in L^2(0,T;C_f)$. To conclude the proof of convergence for the mechanical equations, it remains to show that
\begin{equation}\label{eq:var.limit.toshow}
\int_0^T\HGbracket{\ol{\lambda}}{\jump{\ol{\bu}}_n}\ge 0.
\end{equation}
Indeed, using $\ol{\mu}=\ol{\lambda}$ in \eqref{eq:var.limit.1} then shows that \eqref{eq:var.limit.toshow} holds with an equality which, combined with \eqref{eq:var.limit.1} for a generic $\ol{\mu}$, yields \eqref{weak:multiplier.alternate}.

\medskip

The rest of the proof is devoted to establishing \eqref{eq:var.limit.toshow}. Making $\bv=\bu^k$ in \eqref{GD_meca}, using Lemma \ref{lem:local.coulomb} to cancel the term involving $\lambda^k\jump{\bu^k}_n$, multiplying by $\dtkmun$ and summing over $k\in\{1,\ldots,N\}$ we find
\begin{equation}\label{eq:var.limit.2}
\begin{aligned}
 \int_0^T \int_\Omega \bbsig(\bu) : \bbeps(\bu)  ~\d t\d\x 
    -\int_0^T \int_\Omega b ~\Pi_{\D_p}^m p_m~  \div\,\bu  ~\d t\d\x 
     &+  \int_0^T\int_\Gamma  \Pi_{\D_p}^f p_f~  \jump{\bu}_{n,\faces} ~\d t\d\sigma\\ 
  ={}& \int_0^T\int_\Omega  \mathbf{f} \cdot  \bu ~\d t\d\x.
\end{aligned}
\end{equation}
We aim at passing to the inferior limit on each of these terms. The weak convergence \eqref{eq:conv.u} and the fact that $\bbsig(\bu) : \bbeps(\bu)  =\frac{E}{1+\nu}\(\bbeps(\bu):\bbeps(\bu) + \frac{\nu}{1-2\nu} (\div\,\bu)^2\)$ give
\begin{equation}\label{eq:var.limit.3}
\begin{aligned}
\int_0^T \int_\Omega \bbsig(\ol{\bu}) : \bbeps(\ol{\bu}) ~\d t\d\x\le{}&\liminf_{l\to+\infty}\int_0^T \int_\Omega \bbsig(\bu) : \bbeps(\bu)  ~\d t\d\x ,\\
\int_0^T\int_\Omega  \mathbf{f} \cdot  \ol{\bu} ~\d t\d\x={}&\lim_{l\to+\infty}\int_0^T\int_\Omega  \mathbf{f} \cdot  \bu ~\d t\d\x.
\end{aligned}
\end{equation}
By \eqref{apriori.est}, $\bu$ takes its values in the ball $B_{\U_0}(C)$ in $\U_0$ centred at 0 and of radius $C$. Let $\mathcal K$ be the image under $\jump{{\cdot}}_n$ of $B_{\U_0}(C)$. By the Sobolev-trace inequality $\mathcal K$ is bounded in $L^4(\G)$ and relatively compact in $L^s(\G)$ for all $s<4$. Hence, the Kolmogorov theorem shows that the $L^2$-translations along $\G$ of $\jump{\bu(t)}_n$ are uniformly equi-continuous (with respect to $l$ and $t$), and thus that the face averages of these functions are uniformly close to the functions:
$$
{\sup_{t\in(0,T)}}\left\|\jump{\bu^l(t)}_{n,\faces}-\jump{\bu^l(t)}_n\right\|_{L^2(\G)}\to 0\mbox{ as $l\to+\infty$}.
$$
Since the sequence $(\jump{\bu^l(t)}_{n,\faces})_l=(d_0-d_{f,\D^l_\bu})_l$ is relatively compact in $L^2((0,T)\times \G)$, see Proposition \ref{prop_compactnessdfsf}, this proves that $(\jump{\bu^l}_n)_l$ is also relatively compact in the same space, and thus converges in this space to $\jump{\ol{\bu}}_n$. The weak convergence \eqref{eq:conv.pf} then shows that, as $l\to+\infty$,
\begin{equation}\label{eq:var.limit.4}
\int_0^T\int_\Gamma  \Pi_{\D_p}^f p_f~  \jump{\bu}_{n,\faces} ~\d t\d\sigma\to
\int_0^T\int_\Gamma  \ol{p}_f~  \jump{\ol{\bu}}_n ~\d t\d\sigma.
\end{equation}
To analyse the convergence of the remaining term in \eqref{eq:var.limit.2} (the second one), we use \eqref{GD_closures} to re-write it as
\begin{equation}\label{eq:var.limit.5}
\begin{aligned}
-\int_0^T \int_\Omega b ~\Pi_{\D_p}^m p_m~  {}&\div\,\bu  ~\d t\d\x=
\int_0^T\int_\Omega \frac1M (\Pi_{\D_p}^mp_m)^2  ~\d t\d\x
-\int_0^T\int_\Omega \phi_\D \Pi_{\D_p}^m p_m  ~\d t\d\x\\
&-\int_0^T \int_\Omega b ~\Pi_{\D_p}^m p_m~  \div\,\bu^0  ~\d t\d\x
+\int_0^T\int_\Omega \Pi_{\D_p}^m\phi_m^0 \Pi_{\D_p}^m p_m ~\d t\d\x\\
&-\int_0^T\int_\Omega \frac1M \Pi_{\D_m}^mp_m^0\Pi_{\D_p}^mp_m ~\d t\d\x.
\end{aligned}
\end{equation}
The consistency of the GDs gives the strong convergence of all the terms corresponding to initial conditions (see \cite[Appendix 2]{GDM-poromeca-disc} on how to handle the initial condition on the displacement), and we can therefore, using the weak convergence \eqref{eq:conv.pm}, pass to the limit in the last three terms. The same weak convergence also gives
$$
\int_0^T\int_\Omega \frac1M (\ol{p}_m)^2 ~\d t\d\x \le\liminf_{l\to+\infty}\int_0^T\int_\Omega \frac1M (\Pi_{\D_p}^mp_m)^2 ~\d t\d\x.
$$
To pass to the limit in the second term on the right-hand side of \eqref{eq:var.limit.5}, we invoke Lemma \ref{lem:ws.compactness} with $f_l=\Pi_{\D^l_p}p_m^l$ and $g_l=\phi_{\D^l}$. The strong compactness in space of $(\Pi_{\D^l_p}p_m^l)_l$, requested to apply this lemma, follows from the bound on $(\nabla_{\D^l_p}p_m^l)_l$ in \eqref{apriori.est} and from the assumption \eqref{eq:compact.matrix} (together with Kolmogorov's compactness theorem), while the strong compactness in time, weakly in space, of $(\phi_{\D^l})_l$ follows from Proposition \ref{prop_compactnessdfsf} and from the definition of uniform-in-time weak-$L^2$ convergence. Taking the inferior limit of \eqref{eq:var.limit.5} and using the continuous closure law \eqref{weak:closure.phi} we infer
$$
\liminf_{l\to+\infty}\left(-\int_0^T \int_\Omega b ~\Pi_{\D_p}^m p_m~  \div\,\bu  ~\d t\d\x\right)
\ge -\int_0^T \int_\Omega b ~\ol{p}_m~  \div\,\ol{\bu}  ~\d t\d\x.
$$
Combined with \eqref{eq:var.limit.3} and \eqref{eq:var.limit.4}, this enables us to take the inferior limit of \eqref{eq:var.limit.2} (using the fact that the $\liminf$ of a sum is larger than the sum of the $\liminf$) to find
\begin{equation}\label{eq:var.limit.6}
 \int_0^T \int_\Omega \bbsig(\ol{\bu}) : \bbeps(\ol{\bu})  ~\d t\d\x 
    -\int_0^T \int_\Omega b ~\ol{p}_m~  \div\,\ol{\bu}  ~\d t\d\x 
     +  \int_0^T\int_\Gamma  \ol{p}_f~  \jump{\ol{\bu}}_n ~\d t\d\sigma 
  \le \int_0^T\int_\Omega  \mathbf{f} \cdot  \ol{\bu} ~\d t\d\x.
\end{equation}
Setting $\ol{\bv}=\ol{\bu}$ in \eqref{weak:meca.alternate} and comparing with \eqref{eq:var.limit.6}, we deduce that \eqref{eq:var.limit.toshow} holds, which concludes the proof of Theorem \ref{th:convergence}.

\section{Other models}
\label{sec:other.models}

We briefly describe here how the previous convergence analysis can be adapted to other poromechanical models, when accounting for contact, or highlight the specific challenges some of them raise.

\paragraph{Fracture-width-dependent normal transmissibility.} As already noticed, the normal fracture transmissibility $\Lambda_f$ has been assumed to be independent of $\ol{d}_f$ for simplicity, but the above analysis readily extends to a fracture-width-dependent transmissibility, provided that the function $\ol{d}_f\mapsto\Lambda_f(\ol{d}_f)$ is continuous and bounded above and below by strictly positive constants.

\paragraph{More general schemes for the mechanics.} In \cite{GDM-poromeca-disc}, a more general discretization of the mechanical part of the model is considered using gradient discretizations (as done for the flow here, but adapted to elasticity equations). These GDs cover some nonconforming methods, and stabilised schemes \cite{DL14}. 

Under the same assumptions on these mechanical GDs as in \cite{GDM-poromeca-disc}, replacing the continuous jump and norms in $\U_0$ by the reconstructed jump and norm of the GD
and assuming that the inf--sup condition \eqref{infsup} holds, the analysis above applies.

\paragraph{Continuous pressure.} The model \eqref{eq:model} with discontinuous interface pressures is inspired by the two-phase flow model of \cite{GDM-poromeca-disc}. A continuous-pressure model, {corresponding to the case $\Lambda_f \rightarrow +\infty$,} is presented in \cite{GDM-poromeca-cont}; the treatment of this model requires to be able to ``localise'' the test functions in order to separate the matrix from the fracture terms in the time translates estimates (see the proof of \cite[Proposition 4.7]{GDM-poromeca-cont}); such a localisation is not required with a discontinuous pressure model as the fracture and matrix translate estimates \eqref{est_timetranslates_df} and \eqref{est_timetranslates_phi} are already separated.

The localisation of test functions is achieved through the introduction of a \emph{cut-off property} on the GDs $(\D_p^l)_{l\in\N}$ (see \cite[Section 3.1]{GDM-poromeca-cont}). Under this additional assumption, the convergence analysis carried out here for \eqref{eq:model} can be adapted to the model with continuous pressure.

\paragraph{Two-phase flows.} The two references \cite{GDM-poromeca-cont,GDM-poromeca-disc} mentioned above concern two-phase flow models, but without contact. In this situation, the existence of a solution to the discrete or continuous model is not proved, and the convergence analysis can only be done under the assumption that discrete solutions exist and that the fracture aperture (and matrix porosity) remain bounded below by strictly positive quantities; this is a requirement as the continuous model itself does
not account for any mechanism that ensures this property.

A two-phase {flow} discontinuous-pressure model with contact and friction is proposed in \cite{BDMP:21}, and served as ground for the single-phase model \eqref{eq:model} with contact (and no friction). Even putting aside the friction in the model, the analysis of the two-phase flow model involves more technical arguments, in particular to manage the nonlinear dependency between the saturations and the capillary pressure. However, the most challenging aspect is the dependency of the \emph{a priori} estimates on a lower bound of the porosity (see \cite[Lemma 4.3]{GDM-poromeca-disc} and \cite[Theorem 4.2]{BDMP:21}, and compare with the absence of such a lower bound in Theorem \ref{th:energy_estimates} above). If such a lower bound is assumed, then the convergence analysis of Section \ref{sec:convergence} could probably be adapted to the two-phase flow models without friction (and with the open question of identifying the limit of interface fluxes in case of the discontinuous-pressure model, see the discussion in \cite[Section 4.4]{GDM-poromeca-disc}).

An alternative to assuming a lower bound on the discrete porosities is to use a variant of the model with a semi-frozen porosity as detailed, e.g., in \cite[Remarks 3.1 and 4.3]{BDMP:21}. This model ensures uniform estimates just assuming that the initial discrete porosity (a datum of the model) is bounded below by a strictly positive number. However, this model involves the product of the discrete saturation with $\delta_t\phi_\D$, whose convergence cannot be handled as in \cite[Proof of Theorem 4.1]{GDM-poromeca-disc} by moving the discrete time derivative onto the test function, as this would also create a discrete time derivative of the discrete saturation whose strong convergence cannot be ensured.

{Another alternative developed in \cite{both2023global} for a poromechanical model without fractures is to introduce a bound on the porosity in the model itself in order to ensure its positivity. The extension to mixed-dimensional models remains to be investigated.}

\paragraph{Friction.} Including friction in the contact model consists in adding to \eqref{weak:multiplier} a term of the form $\langle \ol{\boldsymbol{\mu}}_\tau-\ol{\boldsymbol{\lambda}}_\tau,\jump{\partial_t\ol{\bu}}_\tau\rangle_\Gamma$, with $ \jump{\ol{\bv}}_\tau = \jump{\ol{\bv}} -\jump{\ol{\bv}}_n \n^+$, see \cite{BDMP:21}. This term does not challenge the \emph{a priori} estimates on the discrete solutions, but creates severe difficulties for passing to the limit. Indeed, following the arguments in Section \ref{sec:proof.convergence} would require the strong convergence of the discrete versions $\jump{\partial_t\bu}_\tau$. Even if an inertial term $\rho\partial_t^2\ol{\bu}$ is added to the mechanical equations, to ensure that some estimates on the (discrete or continuous) time derivatives of the jumps of the displacement can be established, this would not suffice to ensure their strong compactness. {In \cite{dehoop2023coupling}, the authors account for a regularized frictional contact model with normal stiffness which admits a primal variational formulation. The analysis is based on an additional viscous term in the mechanics constitutive law and a frozen fracture conductivity to deal with the absence of a lower bound on the fracture aperture.} 
 
\section{Numerical example}
\label{sec:num.example}
We present in this section numerical results obtained in a two-dimensional framework, to investigate the convergence of the discrete solutions to the coupled model~\eqref{eq:model}.
\subsection{Space and time discretizations}
The flow component~\eqref{eq_edp_hydro} is discretized using a mixed-dimensional HFV scheme~\cite{ecmor-joubine,BHMS2016} (which fits the GDM framework), whereas for the  contact mechanics component~\eqref{eq_edp_meca} we use, along the lines of~\cite{BDMP:21}, a mixed $\bb P_2$--$\bb P_0$ formulation for displacement and Lagrange multipliers (normal stresses) on fracture faces, respectively (cf.~Figure~\ref{fig:mesh_hfv_p2}). Concerning the flow part, notice also that HFV schemes support general polytopal meshes and anisotropic matrix permeabilities, unlike TPFA schemes such as in~\cite{BDMP:21}. 

With a view towards studying convergence in time, we choose a uniform time step $\Delta t$. 
At each time step, a Newton--Raphson algorithm is used to compute the flow unknowns.
On the other hand, the contact mechanics is solved using a non-smooth Newton method, formulated in terms of an active-set algorithm. For a more detailed description, we refer the reader to~\cite{BDMP:21}. Finally, the coupled nonlinear system is solved at each time step using a fixed-point method on cell pressures $p_m$ and fracture-face pressures $p_f$, accelerated by a Newton--Krylov algorithm~\cite{ecmor}.

We study the convergence rates in space and time, for the flow and mechanics unknowns, by computing the $L^2$-norm of the error in the matrix and along the fracture network, using in the latter case $\bb P_2 $ reconstructions for the jump of the displacement field $\jump{\bu}$ and Lagrange multiplier $\lambda$, as in~\cite{BDMP:21}.

\subsection{Single-phase flow in a rectangular domain including six fractures}
\label{sec:bergen_fluid}
As a reference example, we consider the test case presented in~\cite[Section 4.3]{contact-norvegiens}. In~\cite{BDMP:21}, a similar test case was considered (from~\cite[Section 4.1]{contact-norvegiens}), including friction effects, but without taking into account the effect of a Darcy flow.
It consists of a $2\,m\times 1\,m$ domain -- the long side being aligned with the $x$-axis, the short one with the $y$-axis -- including a network $\Gamma = \bigcup_{i=1}^6 \Gamma_i$ of six hydraulically active fractures, cf.~Figure~\ref{fig:bergen_fluid}. Fracture 1 is made up of two sub-fractures forming a corner, whereas one of the tips of fracture 5 lies on the boundary of the domain.

Here, we consider some major modifications with respect to~\cite[Section 4.3]{contact-norvegiens}, concerning the model employed, the data, as well as initial and boundary conditions.
First, while the fractures are considered impervious in~\cite{contact-norvegiens}, here we allow for flow across and along them. Second, to fully exploit the capabilities of the HFV flow discretization, we consider the following anisotropic permeability tensor in the matrix:
$$\bb  K_m = K_m \,\mathbf e_x \otimes \mathbf e_x + \frac{K_m}{2}\, \mathbf e_y \otimes \mathbf e_y,$$
$\mathbf e_x$ and $\mathbf e_y$ being the unit vectors associated with the $x$- and $y$-axes, respectively.

The permeability coefficient is set to $K_m = 10^{-15}$~m$^2$, the Biot coefficient to $b=0.8$, the Biot modulus to $M=10$~GPa, the dynamic viscosity to $\eta = 10^{-3}$~Pa${\cdot}$s. The initial matrix porosity is set to $\phi_m^0 = 0.4$, and the fracture aperture corresponding to both contact state and zero displacement field is given by 
$$d_0(\x) = \delta_0 \frac{\sqrt{\arctan(a D_i(\x))}}{\sqrt{\arctan(a \ell_i)}},\quad \x\in\Gamma_i,\quad i\in\{1,\ldots,6\},$$
where $D_i(\x)$ is the distance from $\x$ to the tips of fracture $i$, $\delta_0 = 10^{-4}$~m, $a=25$~m$^{-1}$, and $\ell_i$ is a fracture-dependent characteristic length: it is equal to $L_i/2$ ($L_i$ being the length of fracture $i$) if fracture $i$ is immersed, to $L_i$ if one of its ends lies on the boundary, and to the distance of a corner from tips, if it includes a corner. Note that the above expression behaves asymptotically as $\sqrt{D_i(\x)}$ when $\x$ is close to fracture tips, which is in agreement with~\cite[Remark 3.1]{GWGM2015}.

The normal transmissibility of fractures is  set to the fixed value $\Lambda_f = {\delta_0 \over 6\eta}$. Note that a fracture-aperture-dependent normal transmissibility $\Lambda_f = {d_f \over 6 \eta}$ could also be used without any significant effect on the solution, due to its very large value compared with the matrix conductivity. 
The initial pressure in the matrix and fracture network is $p_m^0 = p_f^0 = 10^5$~Pa. Notice that the initial fracture aperture differs from $d_0$, since it is computed by solving the mechanics given the initial pressures $p_m^0$ and $p_f^0$, and therefore the contact state along fractures, represented in Figure~\ref{fig:bergen_fluid}, is such that most fractures are open at $t=0$. Finally, we use the same values of Young's modulus and Poisson's ratio as in~\cite{contact-norvegiens}, i.e.~$E=4$~GPa and $\nu = 0.2$. The final time is set to $T=2000$~s.

Concerning boundary conditions, for the flow, all sides are assumed impervious, except the left one, on which a pressure equal to the initial value $10^5$~Pa is prescribed. For the mechanics, the left and right boundaries are free, the bottom boundary is clamped; on the other hand, the top boundary is given a time-dependent boundary condition defined by
$$
\bu(t,\x) =
\begin{cases}
[0.005\,\mathrm{m}, -0.0005\,\mathrm{m}]^\top\,4t/T &\quad \text{if } t \le T/4,\\
[0.005\,\mathrm{m}, -0.0005\,\mathrm{m}]^\top &\quad \text{otherwise}.
\end{cases}
$$

The physical interpretation of this boundary condition has been already explained in~\cite{contact-norvegiens}: since the absolute value of the $y$-component of the top-boundary displacement increases linearly with time during the first quarter of the simulation, the domain undergoes compression, which gives rise to an increasing fluid pressure. For $t > T/4$, the top-boundary displacement is constant, which implies a decrease in the fluid pressure owing to the pressure Dirichlet condition on the left side, where it is kept equal to the initial value $10^5$~Pa. Note that the simulation time $T$ is set to the order of the pressure relaxation time. 

No analytical solution is available for this test case. We therefore investigate the convergence of our discretization by computing a reference solution on a fine mesh made of 182\,720 triangular elements for spatial approximation errors, and with a time step $\Delta t = 100/16$~s for time approximation errors. Figure~\ref{fig:pressures} shows the pressure variations in the matrix and in the fracture network with respect to the initial values, $\Delta p_\omega = p_\omega - p_\omega^0$, $\omega\in\{m,f\}$, obtained with the above mesh and a time step $\Delta t = 100$~s. The drain behavior of the fractures is clearly visible, reflecting the quasi constant pressure along each fracture as a result of the large fracture tangential conductivity, and the pressure continuity at matrix--fracture interfaces due to the large fracture normal transmissibility.
Figure~\ref{fig:bergen_df_phim_pm_pf} shows the time histories of the average matrix porosity, fracture aperture, and pressures, obtained with the finest mesh. It reflects the expected increase of fluid pressures until the peak of the top-boundary displacement at $t = T/4 = 500$~s, followed by the pressure relaxation for $t > T/4$ almost back to the initial values. As a result of the pressure decrease, it has been checked that the fracture aperture also decreases for $t > T/4$ almost back to the solution obtained for the uncoupled mechanical model at fixed initial pressures and maximum top-boundary displacement.  Figure~\ref{fig:bergen_u_lambda} shows the normal jump $\jump{\bu}_n$ and the tangential jump $\jump{\bu}_\tau = \jump{\bu} -\jump{\bu}_n \n^+$ of the displacement, and Lagrange multiplier, along fractures, at $t=T/4$.
We note in particular the singularity in the neighborhood of the corner in fracture 1  clearly reflected in the behavior of the Lagrange multiplier.
These behaviors are consistent with the contact state at fractures, shown in Figure~\ref{fig:bergen_fluid}, which is mainly governed by the Dirichlet boundary condition on the displacement field at the top boundary. 

To evaluate the convergence of our method, we compute the $L^2$ errors between the reference solution and the solutions obtained using three sequentially and uniformly refined meshes, consisting of 2\,855, 11\,420, and 45\,680 elements. Figure~\ref{fig:bergen_errors1} displays the convergence for the jumps of the normal and tangential  displacements and for the Lagrange multiplier along fractures at time $t=T/4$.
A convergence rate of $1.5$ is observed for the jumps on all fractures, while, as expected, the convergence rate observed for the Lagrange multiplier strongly depends on the singularity of the solution at the fracture tips and corner shown in Figure~\ref{fig:bergen_u_lambda}. Finally, in Figure~\ref{fig:bergen_errors2} we study the convergence rate of the matrix porosity, fracture aperture, and pressures with respect to space and time refinements. We compute the $L^2$ errors in time and space between the reference average solution and solutions obtained by refining from 20 to 320 time steps, fixing a space mesh of 11\,420 elements, and the $L^2$ error in space at final time, using the same set of meshes mentioned above, fixing the time step to $\Delta t = 100$~s. Asymptotic convergence rates equal to 1.5 and 1 are observed, respectively, in space and time.
\section{Conclusions}
\label{sec:conclusions}
We carried out a thorough numerical analysis, in the general framework of the Gradient Discretization Method, for a model coupling single-phase Darcy flows and contact mechanics in linear elastic fractured porous media, assuming frictionless contact and allowing for fluid pressure discontinuities at \mbox{matrix--fracture} interfaces. Two important features of the model are (i)~the nonlinear poromechanical coupling, yielding the fracture conductivity in terms of the fracture aperture \emph{via} the Poiseuille law; and (ii)~the degeneracy of fracture conductivity, given by the vanishing fracture aperture at immersed fracture tips. The major result of this contribution is Theorem~\ref{th:convergence}, giving the convergence of a discrete weak solution towards a continuous one by compactness techniques, and thereby yielding at the same time the existence of such a solution. In Section~\ref{sec:other.models}, we discussed how such an approach could be adapted to more complex models involving flow--mechanics coupling in fractured porous media. Finally, we performed in Section~\ref{sec:num.example} a numerical experiment in a two-dimensional setting, to investigate the convergence of our discretization both in space and in time. For this, we used a Hybrid Finite Volume (HFV) method for the flow and a mixed $\bb P_2$--$\bb P_0$ formulation for the contact mechanics; both schemes fit the Gradient Discretization framework.
%
%
\begin{figure}
\centering
\subfloat[Two-phase flow unknowns, $\alpha\in\{\l,\g\}$]{
\includegraphics[scale=1.05]{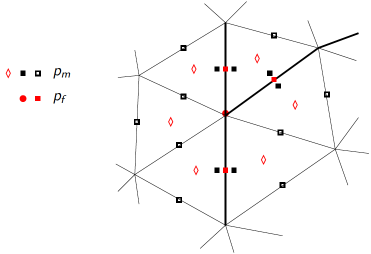}}\qquad
\subfloat[Contact mechanics unknowns]{
\includegraphics[scale=1.1]{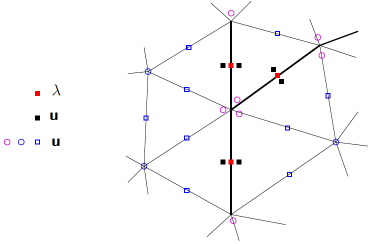}}
\caption{{Example of triangular mesh with three fracture edges in bold. The discrete unknowns are presented for the mixed-dimensional HFV discretization of the flow model in (a), and for the $\bb P_2$--$\bb P_0$ mixed discretization of the contact mechanics in (b). The discontinuities of the pressures and of the displacement are captured at matrix--fracture interfaces. Note that a nodal fracture pressure unknown is directly eliminated (hence not represented) when shared by two fracture edges, while it is kept as an additional discrete unknown when shared by three or more fracture edges}.}
\label{fig:mesh_hfv_p2}
\end{figure}
\begin{figure}
\centering
\subfloat[Contact state at $t=0$]{
\includegraphics[keepaspectratio=true,scale=.135]{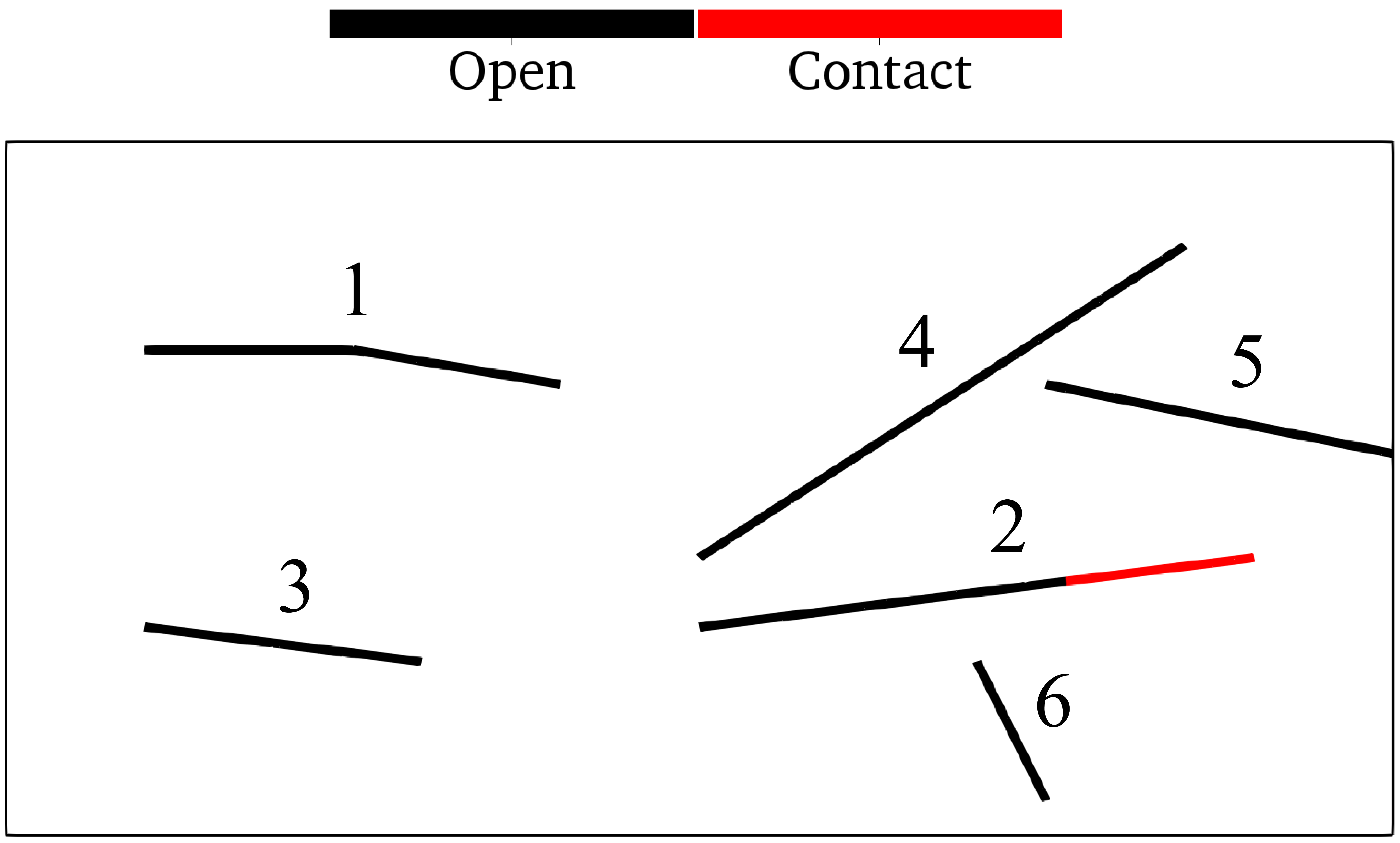}} \ \
\subfloat[Contact state at $t\ge T/4$]{
\includegraphics[keepaspectratio=true,scale=.135]{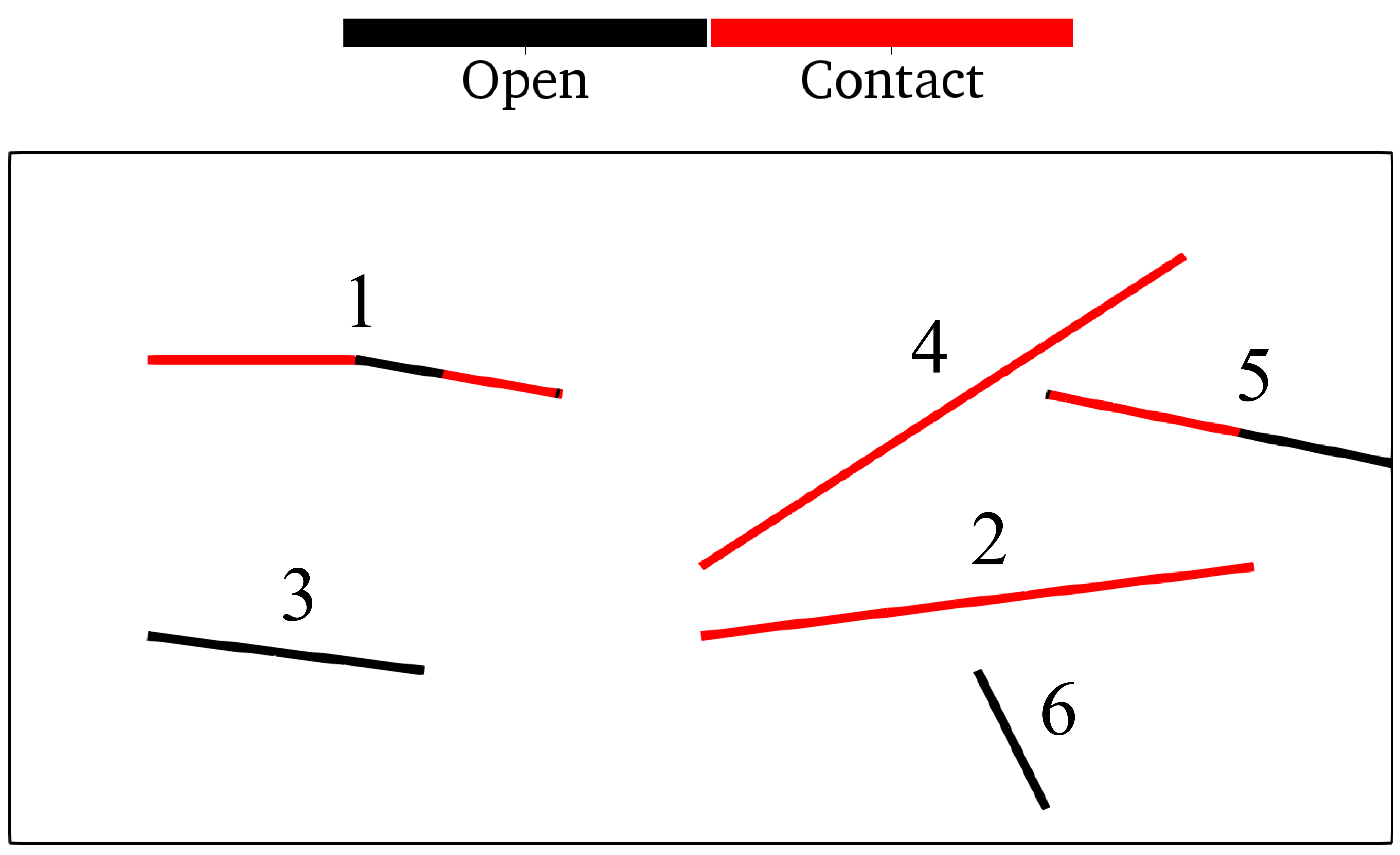}}
\caption{Two-dimensional, $2\times1$\,m domain containing six fractures. Fracture 1 comprises two sub-fractures making a corner, and fracture 5 has a tip on the boundary. The contact state along each fracture over time is also shown.}
\label{fig:bergen_fluid}
\end{figure}
\begin{figure}
\centering
\subfloat[$t=T/4$]{
\includegraphics[keepaspectratio=true,scale=.14]{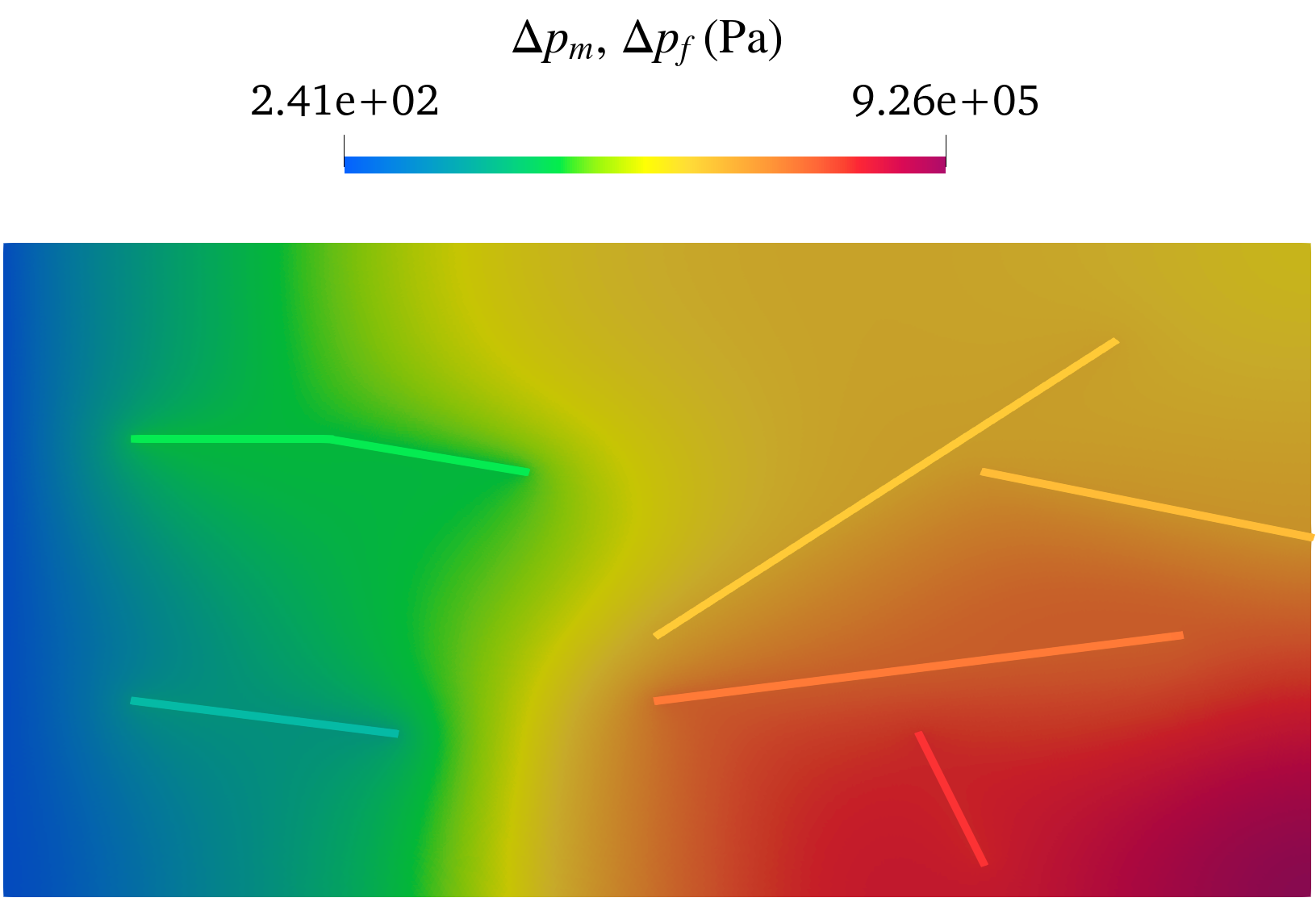}}
\subfloat[$t=T$]{
\includegraphics[keepaspectratio=true,scale=.135]{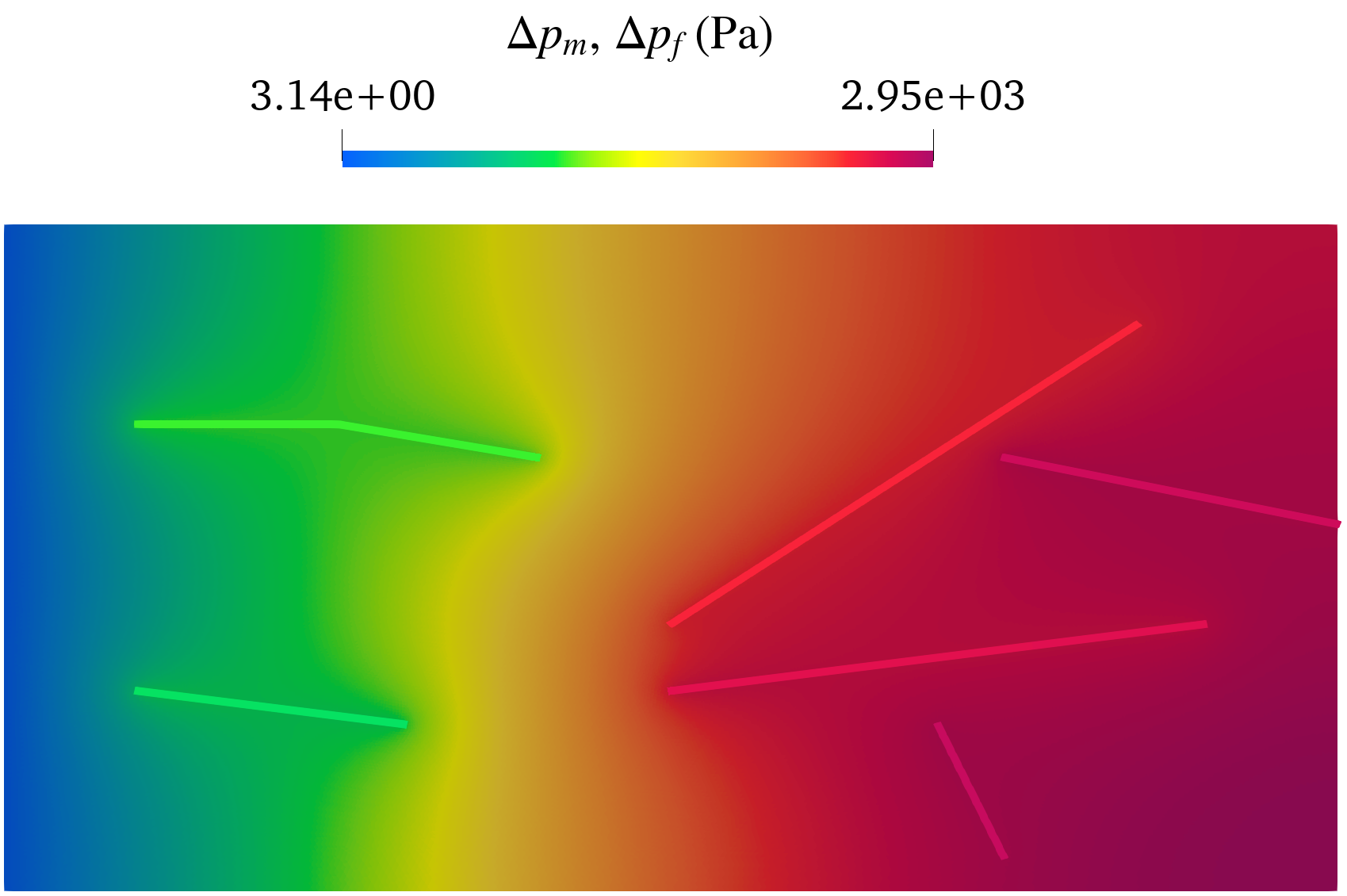}}
\caption{Variation of the matrix and fracture pressures (with respect to the initial value {$p_m^0=p_f^0=~10^5$~Pa}) at $t=T/4$ and $t=T$.}
\label{fig:pressures}
\end{figure}

\begin{figure}
\centering
\subfloat[Mean porosity $\phi_m$]{
\includegraphics[keepaspectratio=true,scale=.565]{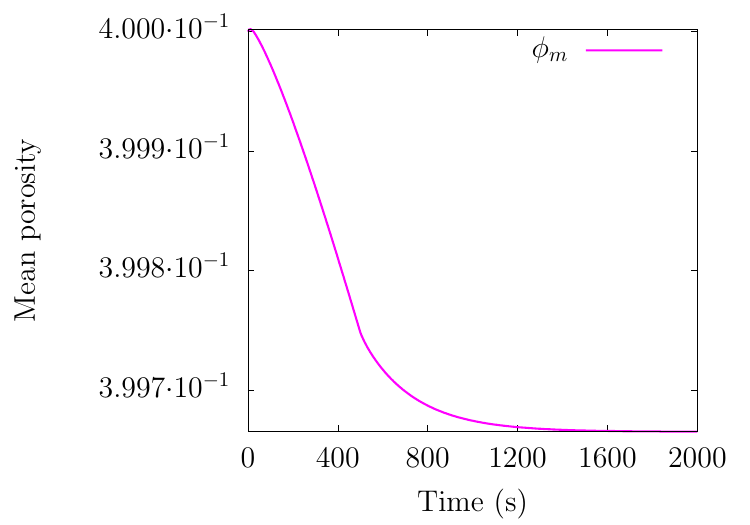}}
\subfloat[Mean fracture aperture $d_f$]{
\includegraphics[keepaspectratio=true,scale=.565]{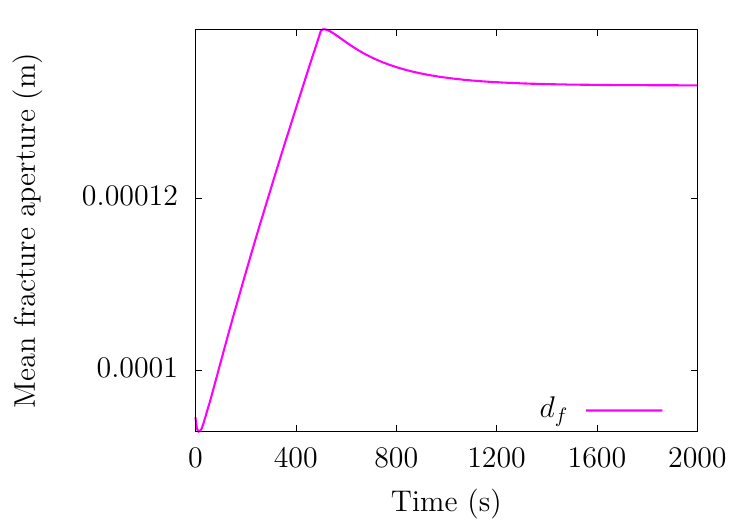}}\\
\subfloat[Mean matrix and fracture pressures $p_m$ and $p_f$]{
\includegraphics[keepaspectratio=true,scale=.565]{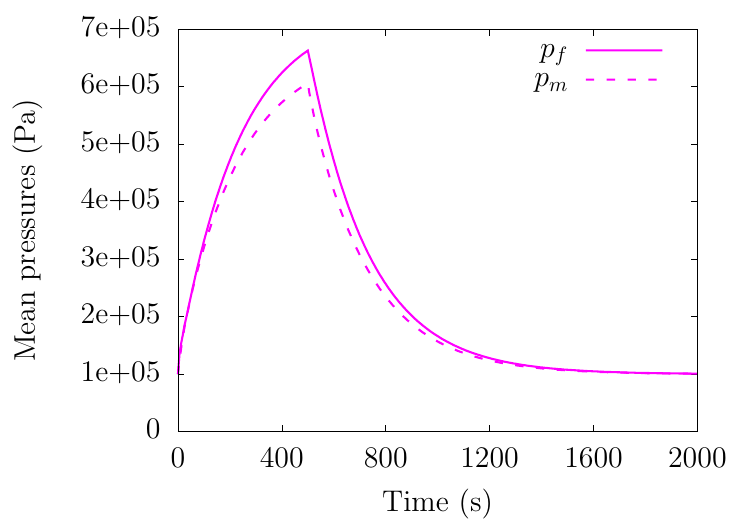}}
\caption{Mean fracture aperture, porosity, and matrix/fracture pressures vs.~time.}
\label{fig:bergen_df_phim_pm_pf}
\end{figure}

\begin{figure}
\centering

\subfloat[$-\jump{\bu}_n$ vs.~$\tau/L_i$]{
\includegraphics[keepaspectratio=true,scale=.625]{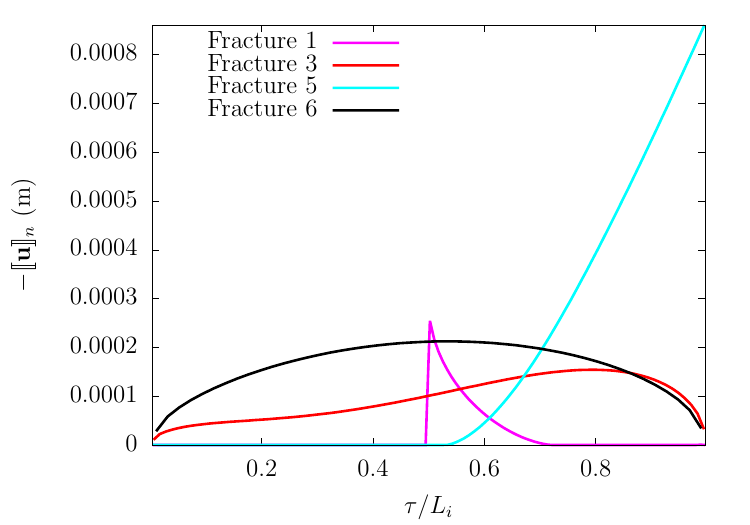}}
\subfloat[$\jump{\bu}_\tau$ vs.~$\tau/L_i$]{
\includegraphics[keepaspectratio=true,scale=.625]{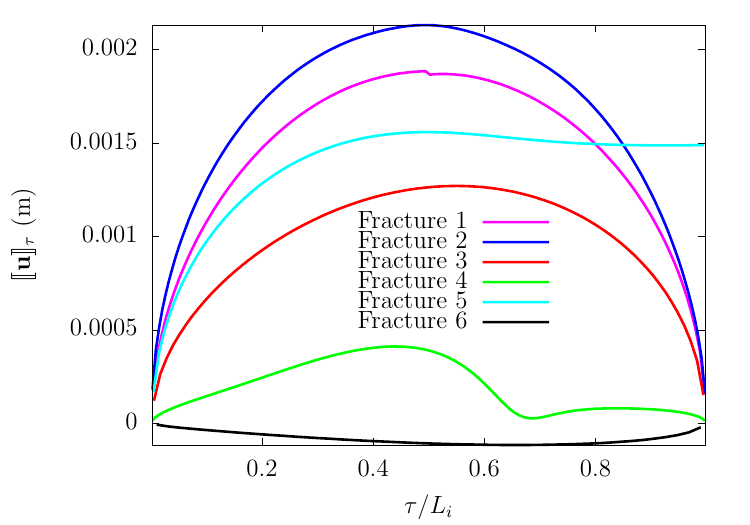}} \\
\subfloat[$\lambda$ vs.~$\tau/L_i$]{
\includegraphics[keepaspectratio=true,scale=.625]{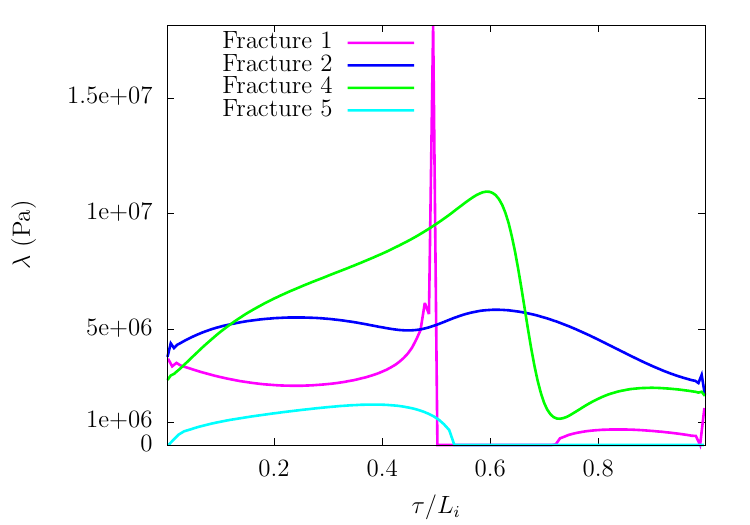}}

\caption{Normal $\jump{\bu}_n$ and tangential $\jump{\bu}_\tau$ jumps of the displacement field, and Lagrange multiplier at $t=T/4$ vs.~scaled distance from the tips ($L_i$ = length of fracture $i$) for each of the six fractures.}
\label{fig:bergen_u_lambda}
\end{figure}

\begin{figure}
\centering
\subfloat[$-\jump{\bu}_n$ vs.~$h$]{
\includegraphics[keepaspectratio=true,scale=.625]{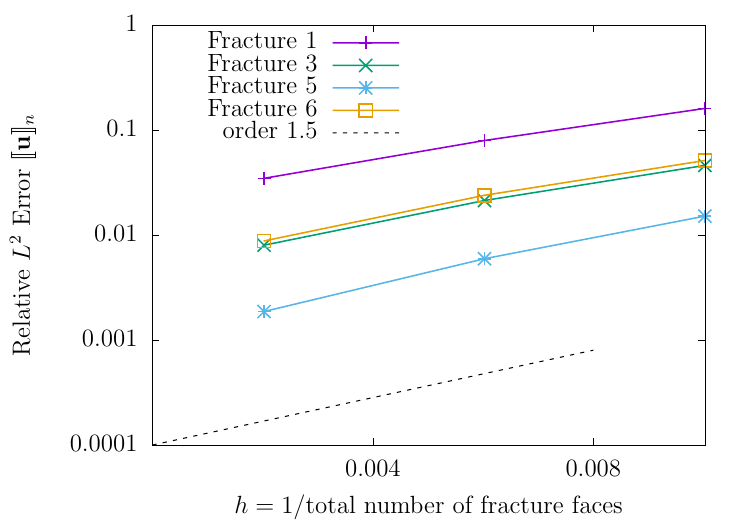}}
\subfloat[$\jump{\bu}_\tau$ vs.~$h$]{
\includegraphics[keepaspectratio=true,scale=.625]{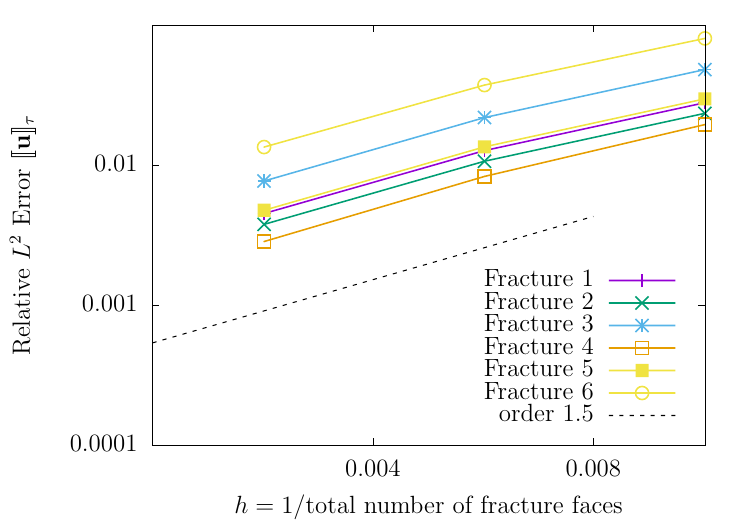}}\\[-.5ex]
\subfloat[$\lambda$ vs.~$h$]{
\includegraphics[keepaspectratio=true,scale=.625]{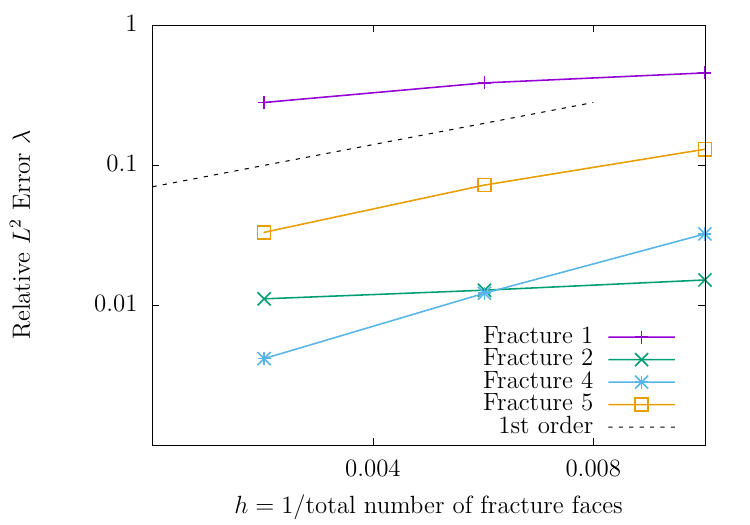}}\\
\caption{$L^2$ errors at $t=T/4$ between the current and reference solutions in terms of $\jump{\bu}_n$ (a), $\jump{\bu}_\tau$ (b), and $\lambda$ (c).}
\label{fig:bergen_errors1}
\end{figure}

\begin{figure}
\centering
\subfloat[Errors vs.~time step with the fixed mesh of 11\,420 cells \\ and the reference solution obtained with $\Delta t = 100/16$~s]{
\includegraphics[keepaspectratio=true,scale=.625]{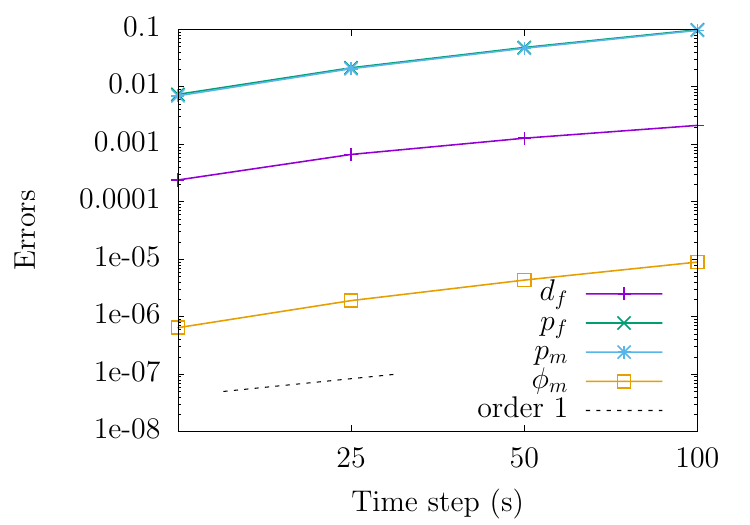}}
\subfloat[Errors vs.~meshsize at final time with fixed time step \\ $\Delta t = 100$~s and reference solution obtained on the finest mesh]{
\includegraphics[keepaspectratio=true,scale=.625]{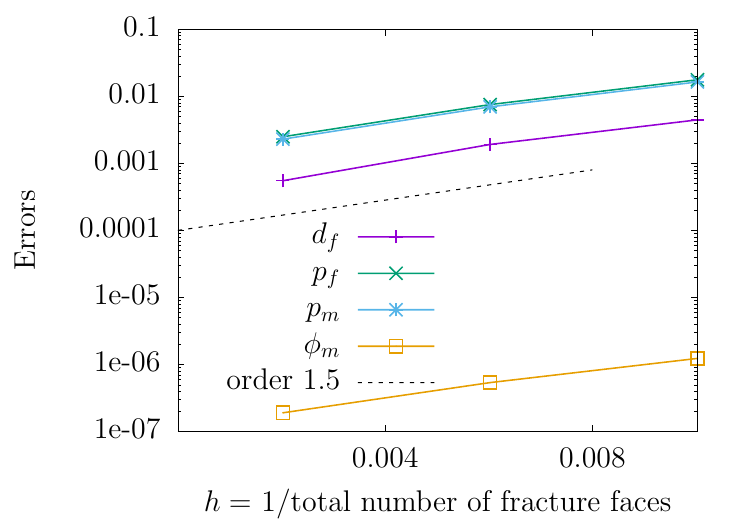}}
\caption{$L^2$ errors in time and space between the current and reference average solutions vs.~time step size, computed with a space mesh of 11\,420 elements and refining from 20 to 320 time steps (reference solution corresponding to the finest time step  $\Delta t = 100/16$~s) (a), and vs.~meshsize, computed with a fixed time step $\Delta t = 100$~s (b).}
\label{fig:bergen_errors2}
\end{figure}
\begin{acknowledgements}
Partially funded by the European Union (ERC Synergy, NEMESIS, project number 101115663).
Views and opinions expressed are however those of the authors only and do not necessarily reflect those of the European Union or the European Research Council Executive Agency. Neither the European Union nor the granting authority can be held responsible for them.
\end{acknowledgements}
%
%
%
\bibliographystyle{plain}
\bibliography{Poromeca_contact}
\end{document}